\documentclass[12pt,english,reqno]{amsart}
\usepackage[T1]{fontenc}
\usepackage{lmodern}
\usepackage{geometry}
\usepackage{amsmath,amssymb,amsthm,mathtools,comment}
\usepackage{amsfonts}
\usepackage[shortlabels]{enumitem}
\usepackage[pdftex,colorlinks,backref=page,citecolor=blue]{hyperref}
\usepackage[mathscr]{euscript}
\usepackage[usenames,dvipsnames]{color}
\usepackage{tikz,babel,adjustbox}
\usepackage[numbers]{natbib}
\usepackage{graphicx}
\usepackage{caption}
\usepackage{subcaption}
\usepackage{verbatim}
\usepackage{array}
\usepackage[frame,cmtip,arrow,matrix,line,graph,curve]{xy}
\usepackage{etoolbox}
\usepackage{pifont}
\usepackage[final]{microtype}
\usepackage{cmtiup}

\hypersetup{pdfpagemode=UseNone,pdfstartview={XYZ null null 1.00}}
\usetikzlibrary{shapes.misc,calc,intersections,patterns,decorations.pathreplacing}
\usetikzlibrary{arrows,arrows.meta,shapes,positioning,decorations.markings}
\tikzstyle arrowstyle=[scale=1]

\allowdisplaybreaks

\makeatletter
\def\@settitle{\begin{center}%
		\bfseries\Large \@title
	\end{center}%
}
\patchcmd{\@setauthors}{\MakeUppercase}{\normalsize}{}{}
\makeatother

\theoremstyle{plain}
\newtheorem{theorem}{Theorem}[section]
\newtheorem{lemma}[theorem]{Lemma}

\newtheorem{proposition}[theorem]{Proposition}
\newtheorem{corollary}[theorem]{Corollary}

\newtheorem{question}[theorem]{Question}
\theoremstyle{remark}

\newcommand{\beq}[1]{\begin{equation}\label{#1}}
\newcommand{\enq}[0]{\end{equation}}
\def\Prob{\mathbb{P}}

\makeatletter
\def\imod#1{\allowbreak\mkern10mu({\operator@font mod}\,\,#1)}
\makeatother
\newcommand{\eps}{\ensuremath{\varepsilon}}

\newcommand{\T}{\mathbb T}
\newcommand{\Z}{\mathbb Z}
\newcommand{\R}{\mathbb R}
\newcommand{\one}{\mathbf 1}
\newcommand{\Lip}{\operatorname{Lip}}
\newcommand{\SL}{\operatorname{SL}}

\newcommand{\nicefont}{\mathsf}
\newcommand{\mM}{\nicefont M}
\newcommand{\mN}{\nicefont N}
\newcommand{\supp}{\operatorname{supp}}
\newcommand{\dist}{\operatorname{dist}}
\newcommand{\diam}{\operatorname{diam}}

\newcommand{\op}{\mathrm{op}}

\begin{document}

\title{Thresholds for geometric graphs}
\author{Bhargav Narayanan}
\address{Department of Mathematics, Rutgers University, Piscataway, NJ 08854, USA}
\email{narayanan@math.rutgers.edu}

\date{20 May 2026}
\subjclass[2020]{Primary 05C80; Secondary 52A40, 60D05, 37A30}

\begin{abstract}
A metric probability space $\mM$ admits thresholds if the random geometric graph on $\mM$ has a threshold for every monotone graph property. We connect the existence of thresholds to the uniform expansion of $\mM$ and prove that all standard tori, spheres, and cubes admit thresholds.
\end{abstract}

\maketitle

\section{Introduction}\label{sec:introduction}

The random geometric graph is parametrised by its radius. The radius plays much the same role as the edge density in the Erd\H{o}s--R\'enyi random graph: as the radius grows, edges are added. This raises a basic question: does every monotone graph property appear in the random geometric graph at a well-defined scale? We answer affirmatively for the standard ambient spaces; the point is that geometric expansion can substitute for probabilistic independence.

Let $\mM=(M,d,\mu)$ be a metric probability space.  The random geometric graph $G_\mM(n,r)$ on $[n] =\{1,\dots,n\}$ is obtained by sampling points $x_1,\dots,x_n$ of $M$ independently from $\mu$ and then joining $i$ to $j$ when $d(x_i,x_j)\le r$.  A monotone graph property $\mathcal F$ on $[n]$ is any family of graphs on $[n]$ closed under adding edges; it need not be symmetric under relabelling the vertices.

A sequence of radii $r_n$ is a \emph{threshold} for a sequence of nontrivial properties $\mathcal F_n$ if
\[
    \Prob(G_\mM(n,s_n)\in\mathcal F_n)\to
    \begin{cases}
        0 & \text{if }s_n/r_n\to0,\text{ and}\\
        1 & \text{if }s_n/r_n\to\infty;
    \end{cases}
\]
here nontrivial means that $\mathcal F_n$ is neither empty nor the family of all graphs.  We say $\mM$ \emph{admits thresholds} if every sequence of nontrivial monotone properties has a threshold.

Before discussing geometry, it is helpful to recall what happens in the Erd\H{o}s--R\'enyi random graph $G(n,p)$. If a monotone property has positive probability at edge density $p$, then we may take several independent copies of $G(n,p)$ and unite their edge sets.  The union of $t$ such copies is another Erd\H{o}s--R\'enyi graph, now with edge density at most $tp$, and the probability of the monotone property is amplified because it is enough for one of the $t$ copies to have the property. This simple amplification argument is one way to see why every monotone graph property has a threshold in the sense of Erd\H{o}s--R\'enyi~\citep{ErdosRenyi1960}. The original proof of this fact, via an isoperimetric inequality for the hypercube, the Kruskal--Katona theorem~\citep{Kruskal1963,Katona1968}, is due to Bollob\'as and Thomason~\citep{BollobasThomason1987}.

For geometric graphs, there is no useful operation that simply takes a union of independent geometric graphs and stays in the same model. The substitute is to move the point configuration around.  Suppose $A\subset M^n$ is the set of configurations whose geometric graph has the desired property at radius $r$. If a measure-preserving map $\phi:M\to M$ sends a configuration of points $(x_1,\dots,x_n)$ into $A$, and if $\phi^{-1}$ stretches distances by at most $K$, then the original configuration has the property at radius $Kr$.  Thus a positive-probability event at radius $r$ becomes a high-probability event at radius $Kr$, \emph{if} a small family of measure-preserving maps can move most configurations into any positive-measure set. This motivates the expansion condition used in the paper.

Our main tool, with definitions to follow, is a geometric analogue of the Bollob\'as--Thomason theorem: independence can be replaced by a spectral gap for bounded-distortion, measure-preserving motions of the ambient space.

\begin{theorem}\label{thm:informal-expansion}
    If $\mM$ is regular and expands uniformly, then $\mM$ admits thresholds.
\end{theorem}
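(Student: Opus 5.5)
The plan is to prove the statement by the amplification scheme sketched before it, with the spectral gap playing the role of independence. Since ``regular'' and ``expands uniformly'' are defined only later, I fix the reading I will use. \emph{Uniform expansion}: there is a finite family $\phi_1,\dots,\phi_k$ of measure-preserving bijections of $M$, each with $\phi_i^{-1}$ (and $\phi_i$) $K$-Lipschitz, such that the averaging operator $T f=\frac1k\sum_i f\circ\phi_i$ has a spectral gap $\lambda<1$ on $L^2_0(\mu)$. Crucially, $k$, $K$ and $\lambda$ are independent of $n$. The diagonal action on $M^n$ then gives an operator $T^{\otimes}$, and a tensorisation estimate bounds its gap on $L^2_0(\mu^{n})$ in a dimension-free way. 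Here I expect to use averaging over random coordinatewise choices, so that on $M^n$ one applies $\phi_{i_1}\times\cdots\times\phi_{i_n}$. In that case, however, distortion is still at most $K$ per coordinate, and the union over words stays in the same model. \emph{Regularity}: balls of radius $r$ have measure comparable to a fixed function $V(r)$ with doubling, $V(Cr)\le C^{D}V(r)$, and $V(r)\to0$ as $r\to0$. This is used to exclude degenerate scales, and to show that $\Prob(G_\mM(n,s)\in\mathcal F_n)$ is continuous and monotone in $s$, running from $0$ to $1$.

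\textbf{Step 1 (boosting).} Let $A_r\subset M^n$ be the set of configurations whose graph at radius $r$ lies in $\mathcal F_n$, and put $\alpha=\mu^{n}(A_r)$. If $x$ is such that some word $w=\phi_{i_1}\circ\cdots\circ\phi_{i_m}$, applied diagonally, sends $x$ into $A_r$, then $w^{-1}$ stretches distances by at most $K^m$. Edges of $w(x)$ at radius $r$ are therefore edges of $x$ at radius $K^m r$, and monotonicity puts $x\in A_{K^m r}$. Hence $A_{K^m r}\supseteq\bigcup_{|w|\le m}w^{-1}(A_r)$. The expander mixing lemma (equivalently, iterating Chebyshev/Cheeger for $T$) shows that the complement of this union has measure at most $\lambda^{2m}(1-\alpha)/\alpha$, or a similar bound. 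Taking $m=O(\log(1/(\alpha\eps))/\log(1/\lambda))$ makes it at most $\eps$.

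\textbf{Step 2 (threshold).} Define $r_n$ as the radius with $\Prob(G_\mM(n,r_n)\in\mathcal F_n)=1/2$; regularity and nontriviality give its existence. Step 1 with $\alpha=1/2$ yields probability at least $1-\eps$ at radius $K^{m(\eps)}r_n$, a constant multiple. For the lower side, apply Step 1 at radius $s$ with $\alpha=\Prob(\cdot\text{ at }s)$. If $\alpha\ge\delta$, then the property holds with probability more than $1/2$ at $K^{m(\delta)}s$, so $s\ge K^{-m(\delta)}r_n$. Therefore $s_n/r_n\to0$ forces $\alpha\to0$, and $s_n/r_n\to\infty$ gives probability tending to $1$.

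\textbf{Main obstacle.} The hard part is to make the expansion on $M^n$ dimension-free. A single diagonal map generally does not expand the product, since the product of $\phi$ with itself has the same gap structure. Using independent maps per coordinate keeps the gap $\lambda$ for $T^{\otimes n}$ (as the spectral gap of a tensor product of averaging operators), but the number of words then becomes $k^{nm}$. The mixing-lemma bound only needs the gap, not the count, so this is fine provided the boosting is phrased with the random operator rather than a union bound over words. I would first try the Cheeger form: $\mu^n\bigl(\bigcup_w w^{-1}A\bigr)\ge$ the measure after $m$ applications of the Markov operator, using $\langle T^m\one_A,\one_{A^c}\rangle$ estimates.
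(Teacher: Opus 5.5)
Your Steps 1 and 2 are, in substance, the paper's argument. The paper sets $F_t=\{T_{\mathsf P}^t\one_F>0\}$ and uses $\Lip(\phi^{-1})\le K$ for the \emph{same} map applied to every point to get $F_t\subset\mathcal F_M(K^tr)$. It then bounds $p^2\mu^n(F_t^c)\le\|T_{\mathsf P}^t(\one_F-p)\|_2^2\le(1-\rho)^{2t}p(1-p)$ and runs the median-radius argument in both directions exactly as you do.

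The gap is in what you feed into Step 1. That step needs the \emph{diagonal} operator $f\mapsto\mathbb E_\phi f(\phi x_1,\dots,\phi x_n)$ to contract $L^2_0(M^n)$ at a rate independent of $n$. In the paper this is not derived from anything. It is condition (E2) in the definition of $(K,\rho)$-expansion, and the real work goes into verifying it for particular spaces: Schottky groups plus Kesten on the Fourier side for tori, and the pillowcase and suspension for spheres.

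Your reading of ``expands uniformly'' as a one-point gap on $L^2_0(\mu)$, followed by ``a tensorisation estimate'', cannot supply this. Suppose the $\phi_i$ are isometries; finitely many rotations of $S^2$ can have a spectral gap on $L^2_0(S^2)$. Then the diagonal operator fixes every mean-zero function of $d(x_1,x_2)$ on $M^2$, so there is no gap even for $n=2$. Equivalently, each $\mathcal F_M(r)$ is invariant under the diagonal action, so a diagonal gap would force it to be null or conull.

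Your fallback is to apply independent maps $\phi_{i_1}\times\cdots\times\phi_{i_n}$ coordinatewise. That operator does have gap $\lambda$ on $L^2_0(\mu^n)$, but it breaks Step 1. The graph is determined by the cross-distances $d(x_j,x_l)$, and $d(\phi_a^{-1}y,\phi_b^{-1}z)$ is not controlled by $d(y,z)$ when $a\ne b$; a per-coordinate Lipschitz bound says nothing about it. So $\bigcup_w w^{-1}(A_r)\subset A_{K^mr}$ fails for product words. The obstruction is not the number of words, as your last paragraph suggests, but that product words do not respect the geometric graph; you yourself note in Step 1 that the words must act diagonally. The repair is to take the uniform diagonal gap as the hypothesis, as the paper does, after which your proof is complete.

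A smaller point concerns your doubling notion of regularity. It does not make $r\mapsto\mu^n(\mathcal F_M(r))$ continuous, since distance laws can have atoms (e.g.\ in ultrametric spaces), so a radius with probability exactly $1/2$ need not exist. The paper builds continuity into its definition of regular. In any case, taking $r_n=\inf\{r:\mu^n(\mathcal F_M(r))\ge1/2\}$ suffices, using right-continuity, which is automatic because edges are defined by $d\le r$.
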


The technical work is therefore to prove uniform expansion for the spaces we care about. Our first application, for tori, is in effect an isoperimetric inequality.

\begin{theorem}\label{thm:main-tori}
    For all $d\ge1$, the torus $\T^d$, with its usual metric and Haar measure, admits thresholds.
\end{theorem}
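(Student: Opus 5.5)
The plan is to deduce the statement from Theorem~\ref{thm:informal-expansion}, so two things have to be checked for $\T^d=\R^d/\Z^d$ with Haar measure and its flat metric: regularity and uniform expansion. Regularity should be the easy half. It presumably asks for something like ball measures $\mu(B(x,r))$ that are independent of $x$ and doubling in $r$, and on the torus $\mu(B(x,r))=\omega_d r^d$ exactly for $r\le 1/2$ by translation invariance. I will dispose of it first and spend the effort on expansion.

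For uniform expansion I will use a fixed finite family of bounded-distortion, measure-preserving maps: the hyperbolic toral automorphisms $\phi_A(x)=Ax \bmod \Z^d$ with $A\in\SL_d(\Z)$, together with translations. Each $\phi_A$ preserves Haar measure, and $\phi_A^{-1}$ is $\|A^{-1}\|_{\op}$-Lipschitz for the flat metric. This holds because $A^{-1}$ acts linearly on $\R^d$ and preserves the lattice $\Z^d$, so it maps the coset representing $y$ onto the coset representing $A^{-1}y$. The distortion is therefore bounded uniformly by $K=\max_i\|A_i^{-1}\|_{\op}$ over the family $\{A_1,\dots,A_k\}$.

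The spectral gap is then a statement on $L^2_0(\T^d)$ about the averaging operator $T f=\frac1k\sum_i f\circ\phi_{A_i}$. In Fourier space, $f\circ\phi_A$ sends the character $e_\xi$ to $e_{A^{\mathsf T}\xi}$. So $T$ is the Markov operator of the action of $\{A_i^{\mathsf T}\}$ on $\Z^d\setminus\{0\}$, and a spectral gap for $T$ on $L^2_0$ amounts to this Cayley-type action having no almost-invariant vectors in $\ell^2(\Z^d\setminus\{0\})$.
\begin{itemize}
\item For $d\ge3$, I will take generators of $\SL_d(\Z)$. The semidirect product $\SL_d(\Z)\ltimes\Z^d$ has relative property (T), which is Kazhdan for $d\ge3$ and Burger for $d=2$. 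This gives exactly the absence of invariant means on $\Z^d\setminus\{0\}$, hence a spectral gap.
\item For $d=2$, the same relative (T) argument of Burger applies to $\SL_2(\Z)\ltimes\Z^2$. Equivalently, I can follow Rosenblatt/Schmidt and use a free subgroup to show that the action on $\Z^2\setminus\{0\}$ is nonamenable.
\item For $d=1$, I expect no nontrivial automorphisms of $\T^1$ to be available, since $\SL_1(\Z)$ is trivial. I will therefore either take $\T^1$ to be covered by a separate argument or use a product/embedding trick, for instance viewing $\T^1$ through a $K$-bi-Lipschitz measure-preserving piecewise-linear interval-exchange family. This is the step I am least sure of. A cleaner route for $d=1$ may be a direct isoperimetric argument, such as the rearrangement inequality on the circle, showing that small translates plus dilations expand sets.
\end{itemize}

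The main obstacle is that ``expands uniformly'' must hold uniformly over all configuration spaces $(\T^d)^n$ acted on diagonally, not just on $\T^d$. I will handle this by tensorisation. The diagonal action of $\phi_A$ on $(\T^d)^n$ has Fourier transform acting on $(\Z^d)^n\setminus\{0\}$ coordinatewise. Any nonzero frequency vector has some nonzero coordinate, and relative property (T) yields a gap for every unitary representation without invariant vectors of the $\Z^d$ part. So the Kazhdan constant controls $\|Tf\|\le(1-\eps)\|f\|$ on $L^2_0((\T^d)^n)$, with $\eps$ independent of $n$. For $d=1$ I would reduce to $d=2$ by the projection $\T^2\to\T^1$ if the paper's framework permits factors. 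Otherwise $d=1$ needs a direct isoperimetric inequality showing that a fixed set of measure-preserving maps of bounded distortion expands every subset of $(\T^1)^n$ uniformly in $n$, and I would attack that by a rearrangement and tensorisation argument.
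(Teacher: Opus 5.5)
\textbf{Genuine gap: the case $d=1$.} This case cannot be handled inside the expansion framework at all. Suppose $\phi:\T^1\to\T^1$ is a bijection with $\Lip(\phi),\Lip(\phi^{-1})<\infty$. Then it is a homeomorphism. A homeomorphism of the circle that preserves Lebesgue measure sends every arc to an arc of the same length, so it is a rotation or a reflection, i.e.\ an isometry. Isometries satisfy $G_\mM(\phi\mathbf x,r)=G_\mM(\mathbf x,r)$. Hence for $F=\mathcal F_M(r)$ with $0<p<1$, the mean-zero function $\one_F-p$ is fixed by every $T_{\mathsf P}$, and $\T^1$ never has $(K,\rho)$-expansion with $\rho>0$. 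This rules out each of your suggestions:
\begin{enumerate}[label=(\roman*)]
\item interval exchanges are discontinuous, hence not Lipschitz;
\item dilations such as $x\mapsto 2x$ are not bijections;
\item translations are isometries;
\item the projection $\T^2\to\T^1$ produces no maps of $\T^1$, so it cannot witness expansion. It also gives only a one-sided comparison between one- and two-dimensional graphs, not a two-sided sandwich like the cube--torus one.
\end{enumerate}
The paper does not prove $d=1$ by this method. It invokes McColm's theorem, whose proof couples the circle model with first-passage percolation. You must cite that result, or supply some argument not based on expansion, for the statement as posed.

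\textbf{The case $d\ge2$.} Your architecture matches the paper's: regularity, then uniform expansion by toral automorphisms, then Theorem~\ref{thm:informal-expansion}. However, the uniform-in-$n$ step is misjustified for $d=2$. Relative property (T) of $(\SL_2(\Z)\ltimes\Z^2,\Z^2)$ concerns representations of the semidirect product having vectors almost invariant under a finite subset of the whole group. But $L^2_0((\T^2)^n)$ carries only the diagonal $\SL_2(\Z)$-action, so there is no ``$\Z^2$ part'' whose lack of invariant vectors you can invoke.

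What is true and usable is the key lemma inside Burger's proof: there is no $\SL_2(\Z)$-invariant mean on $\Z^2\setminus\{0\}$. Combine it with the equivariant map $\mathbf v\mapsto v_j$, $j=\min\{i:v_i\neq0\}$, from $\Omega_n$ to $\Z^2\setminus\{0\}$. This map transfers almost-invariant vectors (via $|f|^2$ and square roots), which makes the gap independent of $n$.

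The paper's route is more elementary. A Schottky pair $a,b$ whose nontrivial words never have eigenvalue $1$ acts freely on $\Omega_n$; freeness uses exactly that some $v_j\neq0$. Every orbit is then the $4$-regular tree, and Kesten gives $\|Q_n\|_{\op}\le\sqrt3/2$ for all $n$. For $d\ge3$ the paper averages this over coordinate planes instead of using property (T).

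Your property (T) route for $d\ge3$ is fine. All $\SL_d(\Z)$-orbits on $\Omega_n$ are infinite, so there are no invariant vectors, and the Kazhdan constant is uniform in $n$. You do need a symmetric measure that charges the identity, so that the bottom of the spectrum is also controlled and you get a norm bound.

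\textbf{Regularity.} In the paper, regularity means that $r\mapsto\mu^n(\mathcal F_M(r))$ is continuous, equals $0$ at $r=0$, and equals $1$ for large $r$; it is not a doubling condition on balls. It follows from $\mu^n\{d(x_i,x_j)=r\}=0$ and dominated convergence (Lemma~\ref{lem:standard-regular}).
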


Here the required uniform expansion comes from linear automorphisms.  Everything interesting already happens in dimension two where a large subgroup of $\SL_2(\Z)$ gives a spectral gap on the nonzero Fourier modes; using groups to find expansion is classical, with Ramanujan graphs~\citep{Margulis1988,LubotzkyPhillipsSarnak1988} a well-known example. In higher dimensions, several mechanisms are available; to keep the presentation self-contained, we place this two-dimensional construction inside each coordinate plane and average over the planes. The one-dimensional case is a theorem of McColm~\citep{McColm2004}, and is \emph{not proved} by this method: the circle has too few linear automorphisms.

Our third result is for spheres, and this requires one further idea.  Rotations preserve all pairwise distances, so they cannot by themselves mix configuration spaces in the way we need. The argument in dimension two is standard: we use the reduction via the pillowcase quotient $\T^2/\{x\sim -x\}$ to the torus $\T^2$ and then transfer uniform expansion back by a change of variables.  For higher-dimensional spheres, we use induction: expand along latitude spheres, rotate, and repeat.

\begin{theorem}\label{thm:main-spheres}
    For all $d\ge1$, the round sphere $S^d$, with its geodesic metric and uniform area measure, admits thresholds.
\end{theorem}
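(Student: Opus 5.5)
The plan is to deduce the statement from Theorem~\ref{thm:informal-expansion}, so we must check that $S^d$ is regular and expands uniformly. Regularity should be routine: the uniform measure of geodesic balls satisfies $\mu(B(x,r))\asymp r^d$ uniformly in $x$, so the sphere is doubling and Ahlfors regular. The work is in producing, for each $d$, a finite family of measure-preserving maps $\phi_1,\dots,\phi_k$ of $S^d$, each bi-Lipschitz with constant at most $K$. Their averaging operator $Tf=\frac1k\sum_i f\circ\phi_i$ should have a spectral gap on $L^2_0(S^d)$, and this gap must pass to the configuration spaces in the uniform form the definition requires. Rotations alone cannot supply the gap, since they preserve all pairwise distances, so each family must contain genuinely distorting maps.

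\emph{Case $d=1$.} The circle $S^1$ with geodesic metric is isometric to a rescaled $\T^1$, so this case is McColm's theorem, already covered by Theorem~\ref{thm:main-tori}.

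\emph{Case $d=2$.} Use the pillowcase: the quotient $\T^2/\{x\sim-x\}$ is a topological sphere. The map $\T^2\to S^2$ built from the Weierstrass-type branched double cover, or more concretely an explicit Lipschitz equivariant map, pushes Haar measure to a measure on $S^2$ that is equivalent to the area measure. Its density is bounded above and below away from the four branch points and has only mild singularities there. The linear automorphisms in $\SL_2(\Z)$ used for Theorem~\ref{thm:main-tori} commute with $x\mapsto -x$, so they descend to the pillowcase. The even Fourier functions carry the same spectral gap. The steps are then:
\begin{itemize}
\item[(i)] descend the $\SL_2(\Z)$-maps to the quotient;
\item[(ii)] conjugate by a measure-correcting homeomorphism, given by a Moser-type or explicit radial change of variables, that converts the pushforward measure to the area measure and is bi-Lipschitz up to a constant;
\item[(iii)] check that a spectral gap of an averaging operator survives conjugation by a measure isomorphism, and that the distortion constants multiply.
\end{itemize}
The delicate point is bi-Lipschitz control near the cone points, where the quotient metric has cone angle $\pi$. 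I would handle this by checking that the change of variables is bi-H\"older with exponent $1/2$ in both directions at those points, and that this cancels for the descended linear maps. Alternatively, I would check that the definition of uniform expansion only needs distance distortion at the scale of the radius, which is polynomially controlled.

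\emph{Case $d\ge3$, by induction.} Write $S^d$ in latitude coordinates $(t,\theta)$ with $t\in[-1,1]$ the height and $\theta\in S^{d-1}$, so the area measure is $(1-t^2)^{(d-2)/2}\,dt\,d\sigma(\theta)$. Each expanding map $\psi$ of $S^{d-1}$ from the induction hypothesis extends fibrewise to $(t,\theta)\mapsto(t,\psi(\theta))$. This extension is measure preserving, and its distortion is at most $K$ because the latitude spheres are scaled copies of $S^{d-1}$. The resulting operator has a gap only on functions with mean zero on each latitude sphere. To mix between latitudes, add finitely many rotations $R_j$ taking the north pole to generic directions, and use the family $\{\psi\}\cup\{R_j\psi R_j^{-1}\}$. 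The spectral gap for the combined average comes from a two-subspace, Kazhdan-type angle argument:
\begin{itemize}
\item[(a)] a function nearly invariant under all these maps is close to a function of height alone, for each of the finitely many axes;
\item[(b)] a function that is zonal about two or more generic axes is constant, with a quantitative angle bound obtained from Funk--Hecke, since zonal spherical harmonics about different axes have inner products $P_\ell(\langle e_1,e_2\rangle)$ bounded away from $1$ in absolute value for $\ell\ge1$.
\end{itemize}
I expect the main obstacle to be making (b) uniform over all degrees $\ell$. For small $\ell$, a few generic axes work. For large $\ell$, the bound $|P_\ell(c)|\le C(1-c^2)^{-1/4}\ell^{-1/2}$ gives decay, so finitely many axes suffice.

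The final step is lifting the gap to the product spaces $M^n$, uniformly in $n$, which is what "uniform" expansion must mean. I would assume that the definition and Theorem~\ref{thm:informal-expansion} already reduce this to the single-factor gap via tensorisation of the averaging operator. If they do not, I would apply the maps diagonally and use a Bollob\'as--Thomason-style amplification coordinatewise.
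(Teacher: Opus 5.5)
Your proposal has a genuine gap at the point you defer to the end: uniformity in $n$. Condition (E2) asks for a gap of the \emph{diagonal} operator $T_{\mathsf P}=\mathbb E_{\phi\sim\mathsf P}[U_\phi^{\otimes n}]$, where $U_\phi f=f\circ\phi$, on $L^2_0(M^n)$ for every $n$ with the same $\rho$. This does not follow from a gap on $L^2_0(M)$. The operator that would tensorise is $(\mathbb E_\phi U_\phi)^{\otimes n}$, which moves the $n$ points by \emph{independent} maps. That operator is useless for Lemma~\ref{lem:amplification}, where all edges are pulled back through a single map $\Phi$. Your own remark about rotations gives the counterexample. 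Finitely many rotations can have a spectral gap on $L^2_0(S^2)$ (Lubotzky--Phillips--Sarnak), yet they fix every mean-zero function $g(d(x_1,x_2))$ on $(S^2)^2$. The fallback of a coordinatewise Bollob\'as--Thomason amplification is circular, because Lemma~\ref{lem:amplification} consumes the $n$-point gap rather than producing it. So the whole content of uniform expansion is the $n$-point statement, and your single-point steps do not supply it.

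Concretely, your induction step must be run on $(S^m)^n$. Step (a) needs the induction hypothesis in its $n$-point form, applied on the fibres $\prod_j\{h_1=t_j\}\cong(S^{m-1})^n$. Step (b) must say that a mean-zero function of the $n$ heights about one axis has correlation at most some $c<1$ with functions of the $n$ heights about a second axis, uniformly in $n$; your zonal statement is only the case $n=1$. The paper proves this $n$-point version, $\|E_2E_1f\|_2\le(1/m)\|f\|_2$ on $L^2_0((S^m)^n)$ (Proposition~\ref{prop:height-transversality}), using Witsenhausen's tensorisation of maximal correlation. Tensorisation is legitimate there because the conditional expectations $E_1,E_2$, unlike the averaging operator, factor over the $n$ independent points. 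Equivalently, your Funk--Hecke computation extends if you expand in products $\prod_j P_{\ell_j}(\langle x_j,e_1\rangle)$; each is sent by $E_2$ to $\prod_j P_{\ell_j}(\langle e_1,e_2\rangle)P_{\ell_j}(\langle x_j,e_2\rangle)$, and distinct multi-indices stay orthogonal. With this input your averaged family works much like the paper's composition $Q_2Q_1$. Uniformity in $\ell$ is not the real obstacle: two perpendicular axes already give $\sup_{\ell\ge1}|P_\ell(0)|=1/m$. Finally, for $d=2$, drop the ``polynomially controlled distortion'' alternative. (E1) needs genuine Lipschitz bounds, since amplification multiplies radii by $K^t$, and H\"older distortion iterates to nothing useful. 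The clean route is the paper's: use angle-doubling $(r,\theta)\mapsto(r,2\theta)$ at the cone points, which is bi-Lipschitz and scales area by a constant, then apply Moser's theorem.
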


A few remarks on the one-dimensional case are also in order. McColm proved, as mentioned earlier, that the circle and the closed interval admit thresholds. This shows that uniform expansion is \emph{not necessary} for thresholds to exist; the proof uses a coupling with the underlying independence in first-passage percolation. On the other hand, the compact, one-dimensional space $[0,1]\cup[2,3]$, with the metric inherited from $\R$ and normalised Lebesgue measure, does \emph{not} admit thresholds: the monotone property of having a connected component with slightly more than half the vertices witnesses this failure.

\textbf{Organisation.}
Conventions are set out below, and some (safely skippable) background follows. Section~\ref{sec:expanding-maps} defines expansion and proves the basic amplification lemma. Section~\ref{sec:tori} proves uniform expansion for tori, Section~\ref{sec:cubes} gives a simple reduction from tori to cubes, and Section~\ref{sec:spheres} proves the spherical result. We conclude in Section~\ref{sec:conclusion} with a few questions about what comes next.

\textbf{Conventions.}
We write $L^2(\mathsf S)$ for the complex-valued square-integrable functions on a measure space $\mathsf S$, and $L^2_0(\mathsf S)$ for its mean-zero subspace; discrete spaces are also written this way, with counting measure understood.  The notation $\|\cdot\|_2$ is used for all such Hilbert-space norms; the ambient space will always make the meaning clear.  For a bounded operator $T$ on any Hilbert space considered here, we write $\|T\|_{\op}$ for the usual operator norm, i.e., the value $\sup_{\|x\|_2=1}\|Tx\|_2$.

\textbf{Background.}
Threshold phenomena are a central topic in the study of random discrete structures.  The cleanest general theory is for product measures on the hypercube, where independence gives access to isoperimetry, influences, and hypercontractivity, amongst other tools. These tools have led to a remarkably robust picture of monotone events in product spaces; see~\citep{janson} for background, and see~\citep{FriedgutKalai1996,Friedgut1999,FKNP,PP} for some highlights.

The obstacle in the present paper is precisely the one hidden by the product setting: the relevant random variables are not independent.  This difficulty is most familiar in statistical physics, where Gibbs measures replace independent bits by interacting configurations, and phase transitions (i.e., threshold behaviour) are often controlled by long-range dependence; see~\citep{FriedliVelenik2017} for example.  We know how to deal with this lack of independence in some notable instances; see~\citep{FKG,KoteckyPreiss1986,DuminilCopinRaoufiTassion2019}.  In many others, we still do not; the hard-sphere model~\citep{AlderWainwright1957} is a well-known example, but we claim no expertise.  The point of our result is that, for geometric graphs on the standard spaces, one can recover a form of amplification without recovering independence.

Random geometric graphs go back to Gilbert~\citep{Gilbert1961}, and many individual thresholds have been determined; see~\citep{Penrose2003} for an overview.  The lack of independence that complicates their analysis also makes geometric graphs a useful source of examples: the classical Bollob\'as--Erd\H{o}s~\citep{BollobasErdos1976} construction in Ramsey--Tur\'an theory is built from points on a high-dimensional sphere, and Ma--Shen--Xie~\citep{MaShenXie2025} recently used such graphs to obtain an exponential improvement in Ramsey lower bounds.

The question of whether the standard ambient spaces admit thresholds seems to have been folklore for some time. The question is natural enough that it likely predates the published record; spheres were recently highlighted by Perkins~\citep{Perkins2025}, and the earliest written reference we know of is McColm~\citep{McColm2004}.

\section{Expansion}\label{sec:expanding-maps}

This section contains our main definition, that of expansion, formalising the basic idea outlined earlier. With this definition in hand, it is easy to prove Theorem~\ref{thm:informal-expansion}.

Fix a metric probability space $\mM=(M,d,\mu)$.  For a configuration of points $\mathbf x=(x_1,\dots,x_n)\in M^n$ and a radius $r\ge0$, write $G_\mM(\mathbf x, r)$ for the graph on $[n]$ in which $i$ and $j$ are adjacent when $d(x_i,x_j)\le r$; the random graph $G_\mM(n,r)$ is obtained by sampling $\mathbf x \in M^n$ from $\mu^n$. If $\mathcal F$ is a monotone graph property on $[n]$, write
\[
\mathcal F_M(r)
=
\left\{\mathbf x\in M^n:G_\mM(\mathbf x, r)\in\mathcal F\right\}
\]
for the corresponding set of configurations.  Thus
\[
    \Prob(G_\mM(n,r)\in\mathcal F)=\mu^n(\mathcal F_M(r)).
\]

For $K\ge0$ and $0<\rho\le1$, we say that $\mM=(M,d,\mu)$ has \emph{$(K,\rho)$-expansion} if there is a finitely supported (to avoid measure-theoretic distractions) probability measure $\mathsf P$ on measure-preserving bijections $\phi:M\to M$ such that the following two conditions hold.
\begin{enumerate}[label=(E\arabic*)]
    \item \textbf{Bounded distortion.}  For every $\phi\in\supp \mathsf P$,
    \[
        \Lip(\phi)\le K
        \quad\text{and}\quad
        \Lip(\phi^{-1})\le K.
    \]

    \item \textbf{Spectral gap.}  For every $n\ge1$, the (family of) averaging operator(s)
    \[
        T_{\mathsf P} f(x_1,\dots,x_n)
        =
        \mathbb E_{\phi\sim\mathsf P}\left[f(\phi(x_1),\dots,\phi(x_n))\right]
    \]
    is contractive in the sense that
    \[
        \|T_{\mathsf P} f\|_2\le (1-\rho)\|f\|_2
    \]
    for every $f\in L^2_0(M^n)$.
\end{enumerate}

We say that \emph{$\mM$ expands uniformly} if it has $(K,\rho)$-expansion for some $\rho>0$ and some finite $K$; increasing $K$ only weakens the bounded-distortion condition, so (to avoid technicalities) we assume that $K>1$ in what follows. The parameter $K$ measures distortion, and $\rho$ is the spectral gap; what is important is that neither parameter depends on the number of points $n$.

It is fairly easy to see that expansion allows us to amplify probabilities in the following sense.

\begin{lemma}\label{lem:amplification}
    Suppose that $\mM=(M,d,\mu)$ has $(K,\rho)$-expansion.  Then for every pair of integers $n \ge 1$ and $t\ge 0$, every monotone graph property $\mathcal F$ on $[n]$, and every $r>0$ with $p=\mu^n(\mathcal F_M(r))>0$, we have
    \[
        \mu^n(\mathcal F_M(K^t r))
        \ge
        1-
        \frac{1-p}{p}(1-\rho)^{2t}.
    \]
\end{lemma}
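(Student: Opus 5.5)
Write $A_s=\mathcal F_M(s)$ for $s>0$, and let $\Phi_t$ denote the random composition $\phi_1\circ\cdots\circ\phi_t$ of $t$ independent samples from $\mathsf P$. The plan has two steps. First, a deterministic inclusion shows that if $\Phi_t$ maps a configuration into $A_r$, then that configuration lies in $A_{K^t r}$. Second, a spectral (Chebyshev-type) bound shows that the set of configurations which $\Phi_t$ almost surely misses $A_r$ from is small.

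\textbf{Geometric step.} Let $\psi=\phi_1\circ\cdots\circ\phi_t$ with each $\phi_i\in\supp\mathsf P$. Then $\Lip(\psi^{-1})\le K^t$ by (E1). Suppose $\psi(\mathbf x)=(\psi(x_1),\dots,\psi(x_n))\in A_r$. If $d(\psi(x_i),\psi(x_j))\le r$, then $d(x_i,x_j)\le K^t r$. Hence $G_\mM(\psi(\mathbf x),r)$ is a subgraph of $G_\mM(\mathbf x,K^t r)$, and monotonicity of $\mathcal F$ gives $\mathbf x\in A_{K^tr}$. Consequently, if $\mathbf x\notin A_{K^t r}$, then $\one_{A_r}(\psi(\mathbf x))=0$ for every such $\psi$, that is, $T_{\mathsf P}^t\one_{A_r}(\mathbf x)=0$. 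Here I use that $T^t_{\mathsf P}f(\mathbf x)=\mathbb E[f(\Phi_t(\mathbf x))]$ with the composition taken in the appropriate order; since all $\phi$ are applied coordinatewise, the order does not matter for the Lipschitz bound.

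\textbf{Spectral step.} Put $f=\one_{A_r}-p\in L^2_0(M^n)$, so that $\|f\|_2^2=p(1-p)$. Each $\phi$ is a measure-preserving bijection, so $T_{\mathsf P}$ fixes constants. Therefore $T^t_{\mathsf P}\one_{A_r}=p+T^t_{\mathsf P}f$. Applying (E2) $t$ times (it applies because $T_{\mathsf P}$ preserves $L^2_0$, by measure preservation) gives $\|T^t_{\mathsf P}f\|_2^2\le(1-\rho)^{2t}p(1-p)$. On the complement $B$ of $A_{K^tr}$ we have $T^t_{\mathsf P}f=-p$ by the geometric step. Hence
\[
p^2\mu^n(B)\le\|T^t_{\mathsf P}f\|_2^2\le(1-\rho)^{2t}p(1-p),
\]
which gives $\mu^n(B)\le\frac{1-p}{p}(1-\rho)^{2t}$, exactly the bound claimed.

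\textbf{Main obstacle.} The only points needing care are bookkeeping. One is checking that $T_{\mathsf P}^t$ corresponds to averaging over $t$-fold compositions: $T_{\mathsf P}^2f(\mathbf x)=\mathbb E_{\phi,\phi'}f(\phi'(\phi(\mathbf x)))$, which is still a composition of maps with distortion at most $K$ each. The other is that $T_{\mathsf P}$ maps $L^2_0$ to itself, so that (E2) can be iterated; this follows because $\int f\circ\phi\,d\mu^n=\int f\,d\mu^n$. Finite support of $\mathsf P$ makes "for every $\psi$ in the support" coincide with "almost surely", so no null-set issues arise. The case $t=0$ is trivial.
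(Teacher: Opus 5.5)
Your proof is correct and follows the paper's argument. You show that any configuration a $t$-fold composition moves into $\mathcal F_M(r)$ already lies in $\mathcal F_M(K^t r)$, and then apply the bound $p^2\mu^n(\cdot)\le\|T_{\mathsf P}^t(\one_{\mathcal F_M(r)}-p)\|_2^2\le(1-\rho)^{2t}p(1-p)$ on the set where $T_{\mathsf P}^t\one_{\mathcal F_M(r)}$ vanishes; the only cosmetic difference is that you apply this bound directly on the complement of $\mathcal F_M(K^t r)$, rather than on the complement of the paper's intermediate set $F_t=\{T_{\mathsf P}^t\one_{\mathcal F_M(r)}>0\}$.
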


\begin{proof}
    Let $F=\mathcal F_M(r)$, and let
    \[
        F_t=\left\{\mathbf x\in M^n:T_{\mathsf P}^t\one_F(\mathbf x)>0\right\}
    \]
    be the set of configurations that can be moved into $F$ by a composition of $t$ maps from $\supp \mathsf P$.

    First, we use bounded distortion.  If $\mathbf x\in F_t$, then the configuration $\Phi\mathbf x$ lies in $F$ for some composition $\Phi$ of $t$ maps from $\supp \mathsf P$.  Hence $G_\mM(\Phi\mathbf x, r)\in\mathcal F$.  Pulling the edges back through $\Phi$ scales distances by at most $K^t$, so
    \[
        G_\mM(\Phi\mathbf x, r)\subset G_\mM(\mathbf x, K^t r).
    \]
    Since $\mathcal F$ is monotone, this implies
    \begin{equation}\label{eq:Ft-contained}
        F_t\subset \mathcal F_M(K^t r).
    \end{equation}

    It remains to show that $F_t$ is large.  On $F_t^c$, we have $T_{\mathsf P}^t\one_F=0$ by definition, while the constant function $p$ is fixed by $T_{\mathsf P}$.  Hence the integrand $|T_{\mathsf P}^t\one_F-p|^2$ is $p^2$ on $F_t^c$, and integrating over $F_t^c$ yields
    \[
        \|T_{\mathsf P}^t\one_F-p\|_2^2
        \ge
        p^2\mu^n(F_t^c).
    \]
    Applying the spectral gap $t$ times to the mean-zero function $\one_F-p$ gives
    \[
        \|T_{\mathsf P}^t\one_F-p\|_2^2
        =
        \|T_{\mathsf P}^t(\one_F-p)\|_2^2
        \le
        (1-\rho)^{2t}\|\one_F-p\|_2^2
        =
        (1-\rho)^{2t}p(1-p),
    \]
    where the last equality comes from computing the variance of the indicator $\one_F$.  If $p > 0$, it follows that
    \[
        \mu^n(F_t)
        \ge
        1-\frac{1-p}{p}(1-\rho)^{2t};
    \]
    with~\eqref{eq:Ft-contained}, the lemma follows.
\end{proof}

Call $\mM$ \emph{regular} if, for every integer $n\ge1$ and every nontrivial monotone graph property $\mathcal F$ on $[n]$, the function $r\mapsto\mu^n(\mathcal F_M(r))$ is continuous, satisfies $\mu^n(\mathcal F_M(0))=0$, and satisfies $\mu^n(\mathcal F_M(r))=1$ for all sufficiently large $r$.

A standard argument using Lemma~\ref{lem:amplification} gives Theorem~\ref{thm:informal-expansion}.

\begin{proof}[Proof of Theorem~\ref{thm:informal-expansion}]
    Let $\mathcal F_n$ be a sequence of nontrivial monotone graph properties, and define
    \[
        r_n=\inf\left\{r:\mu^n((\mathcal F_n)_M(r))\ge1/2\right\}.
    \]
    Regularity gives $0<r_n<\infty$ and $\mu^n((\mathcal F_n)_M(r_n))=1/2$.

    Let $K$ and $\rho$ be expansion constants for $\mM$.  If $s_n/r_n\to\infty$, take
    \[
        t_n=\left\lfloor\log_K(s_n/r_n)\right\rfloor.
    \]
    Then $t_n\to\infty$ and $K^{t_n}r_n\le s_n$ for all large $n$, so Lemma~\ref{lem:amplification} and monotonicity give
    \[
        \mu^n((\mathcal F_n)_M(s_n))
        \ge
        \mu^n((\mathcal F_n)_M(K^{t_n}r_n))
        \ge
        1-(1-\rho)^{2t_n}\to1.
    \]

    Conversely, suppose that $s_n/r_n\to0$.  If $\mu^n((\mathcal F_n)_M(s_n))$ does not tend to $0$, then along a subsequence it is at least some fixed $\eps>0$.  On that subsequence set
    \[
        t_n=\left\lfloor\log_K(r_n/s_n)\right\rfloor-1.
    \]
    Then $t_n\to\infty$ and $K^{t_n}s_n<r_n$ for all large $n$.  Lemma~\ref{lem:amplification}, applied at radius $s_n$, gives
    \[
        \mu^n((\mathcal F_n)_M(K^{t_n}s_n))
        \ge
        1-\frac{1-\eps}{\eps}(1-\rho)^{2t_n}>1/2
    \]
    for all large $n$, contradicting the definition of $r_n$.  Thus $\mu^n((\mathcal F_n)_M(s_n))\to0$.
\end{proof}

Next, we record the fact that regularity is harmless for the spaces under consideration.

\begin{lemma}\label{lem:standard-regular}
    The tori $\T^d$, spheres $S^d$, and cubes $[0,1]^d$, equipped with their usual metrics and normalised measures, are regular.
\end{lemma}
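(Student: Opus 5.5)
The plan is to check the three regularity conditions directly, using only two features shared by all three spaces: (a) $\mu$ is absolutely continuous with respect to the Riemannian volume, so for distinct $i,j$ the distance $d(x_i,x_j)$ has an atomless distribution under $\mu^n$; and (b) the spaces are compact with finite diameter $D$. Fix $n$ and a nontrivial monotone property $\mathcal F$ on $[n]$.

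\textbf{Large radius.} For $r\ge D$, every configuration gives the complete graph on $[n]$. Nontriviality means $\mathcal F$ is nonempty, and monotonicity then puts the complete graph in $\mathcal F$. So $\mathcal F_M(r)=M^n$ and $\mu^n(\mathcal F_M(r))=1$.

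\textbf{Radius zero.} At $r=0$, $i$ and $j$ are adjacent only when $x_i=x_j$. By (a), $\mu^n(x_i=x_j)=0$ for each pair $i\ne j$, since the diagonal is null for an atomless $\mu$. So almost surely $G_\mM(\mathbf x,0)$ is the empty graph. Nontriviality means $\mathcal F$ is not all graphs, and monotonicity then excludes the empty graph. Hence $\mu^n(\mathcal F_M(0))=0$.

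\textbf{Continuity.} This is the only step needing care. Write $g(r)=\mu^n(\mathcal F_M(r))$, which is nondecreasing. Fix $r_0$. Outside the event
\[
E_{r_0}=\{\mathbf x:\exists\, i\ne j,\ d(x_i,x_j)=r_0\},
\]
the graph $G_\mM(\mathbf x,r)$ is constant for $r$ in a neighbourhood of $r_0$, because there are finitely many pairs and none sits exactly at distance $r_0$. Consequently $\one_{\mathcal F_M(r)}(\mathbf x)\to\one_{\mathcal F_M(r_0)}(\mathbf x)$ as $r\to r_0$ for every $\mathbf x\notin E_{r_0}$, and dominated convergence gives continuity of $g$ at $r_0$ provided $\mu^n(E_{r_0})=0$.

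That reduces everything to showing that the law of $d(x,y)$, with $x,y$ independent $\mu$-samples, has no atoms. For $r_0=0$ this is the diagonal statement already used. For $r_0>0$, I would show that for each fixed $x$ the sphere $\{y:d(x,y)=r_0\}$ is $\mu$-null, and then apply Fubini. I expect this to be the only step with geometric content, and it is routine in each case:

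- On $S^d$ the set is a latitude sphere of dimension $d-1$, or a single point when $r_0=\pi$.
- On $\T^d$, with the quotient metric, it is contained in a finite union of translates of Euclidean spheres (the distance is the minimum over lattice translates), so it has Lebesgue measure zero.
- On $[0,1]^d$, with the Euclidean metric, it is a piece of a Euclidean sphere. If the cube instead carries the $\ell^\infty$ or another norm, the sphere of the norm is still Lebesgue-null, since Lebesgue measure scales like $r^d$ and the measure is continuous in $r$.

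This completes the three conditions, so all three spaces are regular.
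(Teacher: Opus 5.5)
Your proposal is correct and follows the paper's proof essentially step for step. Nontriviality plus monotonicity puts the complete graph in $\mathcal F$ and excludes the empty graph; continuity then follows from pointwise convergence of indicators off the null set $\{\exists\, i\ne j: d(x_i,x_j)=r_0\}$ together with dominated convergence. The only difference is that you justify the measure-zero claim, which the paper merely asserts, by using Fubini and checking in each of the three spaces that metric spheres are null.
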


\begin{proof}
    These spaces are compact and non-atomic.  If $\mathcal F$ is nontrivial and monotone, then the empty graph is not in $\mathcal F$ and the complete graph is in $\mathcal F$.  Hence $\mu^n(\mathcal F_M(0))=0$, and $\mu^n(\mathcal F_M(r))=1$ once $r\ge\diam M$.

    For continuity, fix $r$.  As the radius varies, the graph of a configuration can change only when the radius crosses one of the finitely many pairwise distances in that configuration.  The exceptional configurations with $d(x_i,x_j)=r$ for some pair $i<j$ have product measure zero in all three families of spaces.  Therefore the indicators of the sets $\mathcal F_M(r_k)$ converge almost everywhere to the indicator of $\mathcal F_M(r)$ whenever $r_k\to r$, and dominated convergence gives continuity.
\end{proof}

Finally, the following elementary transfer principle helps with the spherical argument.

\begin{lemma}\label{lem:bilip-expansion-transfer}
    Let $\mM=(M,d_M,\mu)$ and $\mN=(N,d_N,\nu)$ be metric probability spaces.  Suppose there is a measure-preserving bijection $\psi:M\to N$ and $0<a\le b<\infty$ such that
    \[
        a\,d_M(x,y)\le d_N(\psi x,\psi y)\le b\,d_M(x,y)
    \]
    for all $x,y\in M$.  If $\mM$ has $(K,\rho)$-expansion, then $\mN$ has $((b/a)K,\rho)$-expansion.
\end{lemma}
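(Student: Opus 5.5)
The plan is to conjugate the expansion data of $\mM$ by $\psi$. Let $\mathsf P$ be the finitely supported probability measure on measure-preserving bijections of $M$ witnessing $(K,\rho)$-expansion. Define $\mathsf Q$ on bijections of $N$ as the pushforward of $\mathsf P$ under $\phi\mapsto\psi\phi\psi^{-1}$. Then $\mathsf Q$ is finitely supported. Each $\psi\phi\psi^{-1}$ is a bijection of $N$, and it is measure-preserving because $\psi$, $\psi^{-1}$ and $\phi$ all are. Note that $\psi^{-1}$ is measure-preserving since $\psi$ is a measure-preserving bijection: $\nu(\psi E)=\mu(\psi^{-1}\psi E)=\mu(E)$. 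We then verify (E1) and (E2) for $\mathsf Q$ with constants $((b/a)K,\rho)$.

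For bounded distortion, the hypothesis gives $\Lip(\psi)\le b$ and $\Lip(\psi^{-1})\le 1/a$. Hence
\[
\Lip(\psi\phi\psi^{-1})\le b\cdot K\cdot (1/a),
\]
and the same bound holds for the inverse $\psi\phi^{-1}\psi^{-1}$, since $\Lip(\phi^{-1})\le K$. So (E1) holds with constant $(b/a)K$.

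For the spectral gap, fix $n$ and let $\Psi:M^n\to N^n$ act coordinatewise by $\psi$. This map is a measure-preserving bijection from $\mu^n$ to $\nu^n$. (One checks this on product sets and extends by the $\pi$--$\lambda$ theorem, or simply notes that the pushforward of a product measure is the product of the pushforwards.) Composition $U f=f\circ\Psi$ is therefore a unitary map $L^2(N^n)\to L^2(M^n)$ that preserves integrals, so it maps $L^2_0(N^n)$ onto $L^2_0(M^n)$. Directly from the definitions,
\[
T_{\mathsf Q} f=U^{-1}T_{\mathsf P}Uf .
\]
Indeed, $(T_{\mathsf Q}f)(\mathbf y)=\mathbb E_\phi f(\psi\phi\psi^{-1}y_1,\dots)=\mathbb E_\phi (Uf)(\phi(\psi^{-1}y_1),\dots)$. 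Consequently, for $f\in L^2_0(N^n)$,
\[
\|T_{\mathsf Q}f\|_2=\|T_{\mathsf P}Uf\|_2\le(1-\rho)\|Uf\|_2=(1-\rho)\|f\|_2 ,
\]
which is (E2).

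The only point needing any care is the measure-theoretic bookkeeping: that $\Psi$ is measure-preserving in both directions, so that $U$ is an isometric bijection that respects mean zero. This is routine for bijections whose inverse is also measurable, and both $\psi$ and $\psi^{-1}$ are Lipschitz and hence continuous. The remaining steps are formal.
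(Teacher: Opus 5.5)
Your proposal is correct and follows the paper's route: the paper conjugates each witnessing map to $\psi\circ\phi\circ\psi^{-1}$ and leaves the verification of (E1) and (E2) as an exercise. You supply that verification correctly, via the Lipschitz bound $bK/a$ for the conjugated maps and their inverses and the unitary conjugacy $T_{\mathsf Q}=U^{-1}T_{\mathsf P}U$ through the coordinatewise map $\Psi$.
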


\begin{proof}
    For each map $\phi:M\to M$ witnessing the expansion of $\mM$, define
    \[
        \widetilde\phi=\psi\circ\phi\circ\psi^{-1}:N\to N;
    \]
    it is an easy exercise to check that these maps witness the expansion of $\mN$ with the required parameters.
\end{proof}

\section{Tori}\label{sec:tori}

We start with tori.  Translations by themselves do nothing to a geometric graph: if all points are shifted together, all pairwise distances stay exactly the same.  The maps we need must preserve measure, but they must also change distances in a controlled way.  On a torus, the useful maps are linear automorphisms.  They preserve measure, each one has bounded distortion, and averaging over a suitable finite family mixes the configuration space.

The proof is entirely in dimension two.  There, a free subgroup of the automorphisms supplies the spectral gap: on the Fourier side, the nonzero frequencies move along copies of the infinite regular tree which, famously, expands.  In higher dimensions, we apply this two-dimensional construction inside each coordinate plane and then average over the planes, so every nonzero frequency is detected by a positive fraction of the averages.  The circle is the exceptional case; it has too few linear automorphisms, and we rely instead on McColm's one-dimensional theorem.

Let $\T^d=\R^d/\Z^d$ denote the standard $d$-dimensional torus, with Haar measure and the usual flat metric.  Every matrix $A\in\SL_d(\Z)$ acts on $\T^d$ by
\[
    x\mapsto Ax \imod{\Z^d};
\]
this map preserves Haar measure since $\det(A) = 1$, and it is bi-Lipschitz with Lipschitz constants $\|A\|_{\op}$ and $\|A^{-1}\|_{\op}$.  We prove uniform expansion for all tori of dimension at least two by averaging over finitely many such automorphisms.

The argument is most transparent on the Fourier side.  On $(\T^d)^n$, the characters are indexed by $\mathbf v=(v_1,\dots,v_n)\in(\Z^d)^n$, and for $\mathbf x = (x_1,\dots,x_n)\in(\T^d)^n$, they are given by
\[
    \chi_{\mathbf v}(\mathbf x)
    =
    \exp\left(2\pi i\sum_{j=1}^n \langle v_j, x_j\rangle\right).
\]
The character with $\mathbf v=0$ is constant, and the remaining characters form an orthonormal basis of $L^2_0((\T^d)^n)$.  Given $A \in \SL_d(\Z)$, the key identity for its diagonal action on $(\T^d)^n$ is
\[
    \chi_{\mathbf v}(A \mathbf x)=\chi_{A^T\mathbf v}(\mathbf x),
\]
where $A \mathbf x = (Ax_1, \dots, Ax_n)$ and $A^T\mathbf v=(A^T v_1,\dots,A^T v_n)$.  Thus diagonal torus automorphisms act by permuting the nonzero Fourier frequencies.

For a finitely supported probability measure $\mathsf P$ on $\SL_d(\Z)$, the corresponding distortion constant
\[
    \max_{A\in\supp \mathsf P}
    \max\left\{\|A\|_{\op},\|A^{-1}\|_{\op}\right\}
\]
is always finite. Let $T_{\mathsf P}$ be the corresponding averaging operator on $L^2((\T^d)^n)$, namely
\[
    T_{\mathsf P} f(x_1,\dots,x_n)
    =
    \mathbb E_{A\sim\mathsf P}\left[f(Ax_1,\dots,Ax_n)\right].
\]

Writing $\Omega_n=(\Z^d)^n\setminus\{0\}$ for the space of nonzero frequencies, let $Q_n$ be the frequency-side operator on $L^2(\Omega_n)$ that, for $\mathbf{v} \in \Omega_n$, sends a square-integrable $h$ to $Q_n h$ defined by
\[
    Q_nh(\mathbf v)
    =
    \mathbb E_{A\sim\mathsf P}\left[h((A^T)^{-1}\mathbf v)\right].
\]
Indeed, if $f \in L^2_0((\T^d)^n)$ has the Fourier representation
\[
    f=\sum_{\mathbf v\in\Omega_n}\widehat f(\mathbf v)\chi_{\mathbf v},
\]
then the identity above gives
\[
    T_{\mathsf P} f=\sum_{\mathbf v\in\Omega_n}(Q_n\widehat f)(\mathbf v)\chi_{\mathbf v}.
\]

By Parseval, showing that $T_{\mathsf P}$ is contractive amounts therefore to controlling $\|Q_n\|_{\op}$ on the nonzero frequencies, uniformly in $n$.

We now choose the automorphisms.  For this, we use the following standard fact about Schottky subgroups of $\SL_2(\Z)$; see Beardon's book~\citep{Beardon1983} for a textbook presentation.
\begin{lemma}\label{lem:schottky}
    There are matrices $a,b\in\SL_2(\Z)$ that freely generate a subgroup $\Gamma\cong F_2$ such that no non-identity element of $\Gamma$ has eigenvalue $1$. \qed
\end{lemma}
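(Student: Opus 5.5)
We first reduce the condition to a statement about traces. For $g\in\SL_2(\Z)$ the characteristic polynomial is $\lambda^2-\operatorname{tr}(g)\lambda+1$, so $g$ has eigenvalue $1$ exactly when $\operatorname{tr}(g)=2$. It therefore suffices to find a free subgroup $\Gamma=\langle a,b\rangle$ in which every non-identity element is \emph{hyperbolic}, i.e.\ satisfies $|\operatorname{tr}(g)|>2$. Trace is a conjugacy invariant, and every non-identity element of $F_2$ is conjugate to a cyclically reduced word. Hence we only need to show that each nontrivial cyclically reduced word in $a,b$ is hyperbolic; freeness will come out of the same argument.

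\textbf{Choice of generators.} Start from the hyperbolic matrix $h=\begin{pmatrix}2&1\\1&1\end{pmatrix}$. It acts on $\R\mathrm{P}^1$ by $x\mapsto(2x+1)/(x+1)$. Its fixed points are the roots of $x^2-x-1=0$, namely $\varphi\approx1.618$ (attracting) and $-1/\varphi\approx-0.618$ (repelling). Let $u=\begin{pmatrix}1&1\\0&1\end{pmatrix}$ and $h'=uhu^{-1}$; this is again hyperbolic, with fixed points $\varphi+1\approx2.618$ and $1-1/\varphi\approx0.382$. The four fixed points are distinct, so we may choose four pairwise disjoint closed arcs of $\R\mathrm{P}^1$: $D_a^{+},D_a^{-}$ around the attracting and repelling fixed points of $h$, and $D_b^{\pm}$ around those of $h'$. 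Since a hyperbolic element contracts everything outside a neighbourhood of its repelling point toward its attracting point, for $N$ large the matrices $a=h^N$ and $b=h'^N$ satisfy the strict ping-pong conditions
\[
a^{\pm1}\bigl(\R\mathrm{P}^1\setminus \operatorname{int} D_a^{\mp}\bigr)\subset\operatorname{int}D_a^{\pm},\qquad
b^{\pm1}\bigl(\R\mathrm{P}^1\setminus \operatorname{int} D_b^{\mp}\bigr)\subset\operatorname{int}D_b^{\pm}.
\]

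\textbf{Ping-pong and hyperbolicity.} Let $w=s_1\cdots s_k$ be a nontrivial cyclically reduced word, and suppose its first letter $s_1$ has target arc $D$, so that $s_1$ maps into $\operatorname{int}D$. Cyclic reducedness means the last letter $s_k$ is not $s_1^{-1}$. Consequently $D$ is disjoint from the repelling arc of $s_k$, and the standard ping-pong induction gives $w(D)\subset\operatorname{int}D$. Two conclusions follow.
\begin{itemize}[label={}]
\item First, $w\ne\pm I$. This gives freeness of $\Gamma$, and also that $-I\notin\Gamma$, since otherwise $-I$ would be a nontrivial relation.
\item Second, $w$ maps a closed arc strictly into its interior. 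A Möbius map doing this has an attracting fixed point inside $D$ and is a strict contraction there, so it cannot be elliptic (no real fixed points), parabolic (a single neutral fixed point) or $\pm I$. Hence $|\operatorname{tr}(w)|>2$.
\end{itemize}

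\textbf{Expected obstacle.} The main care is needed in this last step, making rigorous that ``closed arc into its interior'' forces hyperbolicity. One way is Brouwer's fixed point theorem, which gives a fixed point in $\operatorname{int}D$, combined with the classification of elements of $\mathrm{PSL}_2(\R)$ by trace. In the parabolic case, the map is conjugate to $x\mapsto x+c$, and every invariant-ish arc must have the fixed point $\infty$ on its boundary behaviour, which contradicts the strict inclusion. The alternative is simply to cite the classical fact from \citep{Beardon1983} that Schottky groups defined by disjoint closed arcs (or circles) are purely loxodromic.
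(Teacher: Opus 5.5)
Your proposal is correct and is exactly the classical Schottky ping-pong argument (pairwise disjoint closed arcs, generators $h^N$ and $(uhu^{-1})^N$); the paper does not reproduce this argument but simply cites it from Beardon. The step you flagged closes cleanly without the trace classification: $w(D)\subset\operatorname{int}D$ is equivalent to $w^{-1}(E)\subset\operatorname{int}E$ for the complementary closed arc $E=\R\mathrm{P}^1\setminus\operatorname{int}D$, so the intermediate value theorem gives fixed points of $w$ in both $\operatorname{int}D$ and $\operatorname{int}E$, and two distinct real eigenlines together with $w\neq\pm I$ force eigenvalues $\lambda\neq\lambda^{-1}$, hence $|\operatorname{tr}w|>2$.
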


What we need comes from the expansion of the Cayley graph of this free group, namely the infinite $4$-regular tree. The following computation is due to Kesten~\citep{Kesten1959}; see Kowalski's book~\citep{Kowalski2019} for a modern treatment.
\begin{theorem}\label{thm:kesten-tree}
    Let $H$ be the infinite $4$-regular tree. For the simple random walk operator $M$ on $L^2(H)$ that averages a function $h$ on the vertices of $H$ uniformly over neighbours, we have $\|M\|_{\op}={\sqrt{3}}/{2} < 1$. \qed
\end{theorem}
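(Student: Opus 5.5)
The plan is to prove the two inequalities $\|M\|_{\op}\le\sqrt3/2$ and $\|M\|_{\op}\ge\sqrt3/2$ separately. The upper bound comes from a Schur test with an explicit positive supersolution. The lower bound comes from truncations of that same function used as test functions. Fix a root $o$ of $H$ and write $|v|$ for the graph distance from $o$ to $v$. Each vertex $v\neq o$ has one neighbour at level $|v|-1$ and three at level $|v|+1$. The root has four neighbours at level $1$, and the sphere of radius $k\ge1$ has $4\cdot3^{k-1}$ vertices.

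\emph{Upper bound.} Set $h(v)=3^{-|v|/2}$. For $v\ne o$,
\[
Mh(v)=\tfrac14\left(3^{-(|v|-1)/2}+3\cdot3^{-(|v|+1)/2}\right)=\tfrac14\left(\sqrt3+\sqrt3\right)h(v)=\tfrac{\sqrt3}{2}\,h(v).
\]
At the root, $Mh(o)=3^{-1/2}\le\frac{\sqrt3}{2}h(o)$. So $h>0$ satisfies $Mh\le\frac{\sqrt3}{2}h$ pointwise.

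The operator $M$ is self-adjoint on $L^2(H)$ with the symmetric nonnegative kernel $\frac14\one_{u\sim v}$. The Schur test then gives the bound. For $f,g\in L^2(H)$, apply Cauchy--Schwarz with weights $\sqrt{h(v)/h(u)}$ and $\sqrt{h(u)/h(v)}$:
\[
|\langle Mf,g\rangle|\le\sum_{u\sim v}\tfrac14|f(u)||g(v)|
\le\Big(\sum_u|f(u)|^2\tfrac{Mh(u)}{h(u)}\Big)^{1/2}\Big(\sum_v|g(v)|^2\tfrac{Mh(v)}{h(v)}\Big)^{1/2}.
\]
Each factor is at most $\big(\frac{\sqrt3}{2}\big)^{1/2}$ times the corresponding norm. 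Hence $\|M\|_{\op}\le\sqrt3/2$.

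\emph{Lower bound.} Take $f_R=h\,\one_{\{|v|\le R\}}$. Each level $1\le k\le R$ contributes exactly $4\cdot3^{k-1}\cdot3^{-k}=4/3$ to $\|f_R\|_2^2$, so $\|f_R\|_2^2\asymp R$. At every vertex with $1\le|v|\le R-1$ the eigen-relation $Mf_R(v)=\frac{\sqrt3}{2}f_R(v)$ holds exactly. The only discrepancies occur at the root and at levels $R$ and $R+1$. Their total contribution to $\langle Mf_R,f_R\rangle$ is $O(1)$, again because each level carries $O(1)$ mass. Therefore
\[
\frac{\langle Mf_R,f_R\rangle}{\|f_R\|_2^2}=\frac{\sqrt3}{2}-O(1/R)\to\frac{\sqrt3}{2},
\]
which gives $\|M\|_{\op}\ge\sqrt3/2$. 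Combining the two bounds, $\|M\|_{\op}=\sqrt3/2<1$.

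The only step needing care is the boundary bookkeeping in the lower bound: one must check that the boundary errors are $O(1)$, uniformly in $R$, rather than of order $\|f_R\|_2^2$. This holds because the weights $3^{-k}$ exactly balance the level sizes $4\cdot 3^{k-1}$. Everything else is a direct computation.
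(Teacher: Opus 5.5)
Your proof is correct, and it differs from the paper in that the paper gives no argument at all. It treats the statement as a black box, citing Kesten and Kowalski's book. Kesten's original proof works with the return-probability generating function of the walk on the free group, using that for a symmetric walk on a group $\|M\|_{\op}=\limsup_k p_{2k}(o,o)^{1/(2k)}$.

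Your route avoids return probabilities entirely. The positive supersolution $h=3^{-|v|/2}$ together with the Schur test gives the upper bound in a few lines, and the truncations $f_R$ give the matching lower bound. The bookkeeping checks out exactly: $\|f_R\|_2^2=1+4R/3$ and $\langle Mf_R,f_R\rangle=2R/\sqrt3$, where level $R+1$ contributes nothing because $f_R$ vanishes there. So the Rayleigh quotient is $2\sqrt3R/(3+4R)\to\sqrt3/2$.

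What your approach buys is a short, self-contained argument that generalises verbatim to the $(q+1)$-regular tree: take $h=q^{-|v|/2}$ to get $2\sqrt q/(q+1)$. It also isolates the half the paper actually uses. Proposition~\ref{prop:torus-gap} only needs $\|M\|_{\op}\le\sqrt3/2$, i.e.\ $1-\alpha^2\ge1/4$, so your lower-bound paragraph matters only for the equality in the statement. What the cited route buys is more information, none of which the paper needs: the full return-probability asymptotics and, in Kesten's framework, the link between $\|M\|_{\op}<1$ and non-amenability.
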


Let $a,b\in\SL_2(\Z)$ be the matrices supplied by Lemma~\ref{lem:schottky}, and put $S=\{a,a^{-1},b,b^{-1}\}$.  For $d\ge2$, let
\[
    \mathcal E_d=\left\{\{p,q\}:1\le p<q\le d\right\}
\]
be the set of coordinate two-planes.  If $e=\{p,q\}\in\mathcal E_d$ and $s\in S$, let $s{(e)}\in\SL_d(\Z)$ act by $s$ on the $p$ and $q$ coordinates, in that order, and act as the identity on the remaining coordinates.  Let $\mathsf P_d$ be the probability measure obtained by choosing $e\in\mathcal E_d$ uniformly, choosing $s\in S$ uniformly, and taking $s{(e)}$. Since $S=S^{-1}$, set
\[
    K_d=
    \max_{e\in\mathcal E_d}
    \max_{s\in S}
    \|s{(e)}\|_{\op}.
\]
This probability measure $\mathsf P_d$ witnesses the uniform expansion of $\T^d$, as we prove below.

\begin{proposition}\label{prop:torus-gap}
    For $d\ge2$, the torus $\T^d$ has $\left(K_d,1-\sqrt{1-1/(2d)}\right)$-expansion.
\end{proposition}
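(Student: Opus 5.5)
The bound (E1) holds by construction. Since $S=S^{-1}$, every map in $\supp\mathsf P_d$ has its inverse in $\supp\mathsf P_d$ as well. So both Lipschitz constants are at most $K_d$, by the definition of $K_d$ and the remark that $A$ acts with Lipschitz constants $\|A\|_{\op}$ and $\|A^{-1}\|_{\op}$.

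For (E2), by Parseval it suffices to show
\[
\|Q_n\|_{\op}^2\le 1-1/(2d)\quad\text{on } L^2(\Omega_n),
\]
uniformly in $n$. I would write
\[
Q_n=\frac{1}{|\mathcal E_d|}\sum_{e\in\mathcal E_d}Q^{(e)},
\]
where $Q^{(e)}$ averages $h((s{(e)}^T)^{-1}\mathbf v)$ over $s\in S$. By convexity of the squared norm,
\[
\|Q_nh\|_2^2\le \frac{1}{|\mathcal E_d|}\sum_e\|Q^{(e)}h\|_2^2,
\]
so it is enough to bound each $Q^{(e)}$ separately.

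Fix $e=\{p,q\}$. For $\mathbf v\in\Omega_n$, let $\pi_e\mathbf v\in(\Z^2)^n$ record the $(p,q)$-coordinates of each $v_j$. The map $s{(e)}$ moves only these coordinates, and it moves them by the diagonal action of $s^{-T}$ on $(\Z^2)^n$. Split $\Omega_n=Z_e\sqcup N_e$ according to whether $\pi_e\mathbf v=0$ or not; both pieces are invariant under every $s{(e)}$.

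On $Z_e$, the operator $Q^{(e)}$ is the identity. On $N_e$, I claim each orbit of the group generated by $\{s^{-T}:s\in S\}$ is a copy of the $4$-regular tree, with $Q^{(e)}$ acting as the simple random walk on it. The map $g\mapsto g^{-T}$ is an automorphism of $\Gamma$ onto a group freely generated by $a^{-T},b^{-T}$, so the generators are free. The stabiliser of a point of $N_e$ is trivial. Indeed, if $g^{-T}w=w$ with $g\in\Gamma\setminus\{1\}$ and $w=\pi_e\mathbf v$, pick $j$ with $w_j\ne0$. Then $1$ is an eigenvalue of $g^{-T}$. For $g\in\SL_2$, the matrices $g^{-T}$ and $g$ have the same eigenvalue pair $\{\lambda,\lambda^{-1}\}$, so $1$ is an eigenvalue of $g$, contradicting Lemma~\ref{lem:schottky}.

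So the orbit is the Cayley graph of $F_2$ with respect to $S$, and $Q^{(e)}$ restricted to it is exactly the operator $M$ of Theorem~\ref{thm:kesten-tree}. Hence $\|Q^{(e)}\|_{\op}\le\sqrt3/2$ on $L^2(N_e)$, and therefore
\[
\|Q^{(e)}h\|_2^2\le \|h\mathbf 1_{Z_e}\|_2^2+\tfrac34\|h\mathbf 1_{N_e}\|_2^2=\|h\|_2^2-\tfrac14\|h\mathbf 1_{N_e}\|_2^2.
\]

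Averaging over $e$ gives
\[
\|Q_nh\|_2^2\le\|h\|_2^2-\frac14\sum_{\mathbf v}|h(\mathbf v)|^2\cdot\frac{\#\{e:\mathbf v\in N_e\}}{|\mathcal E_d|}.
\]
Every $\mathbf v\in\Omega_n$ has some coordinate $v_{j,p}\ne0$, and then $\mathbf v\in N_e$ for all $d-1$ planes $e\ni p$. This is a fraction $(d-1)/\binom d2=2/d$ of the planes. Thus
\[
\|Q_nh\|_2^2\le(1-\tfrac1{2d})\|h\|_2^2,
\]
so $\rho=1-\sqrt{1-1/(2d)}$, independent of $n$.

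The main obstacle is the orbit step: checking that the diagonal action on $N_e$ is free, so that orbits are genuinely trees rather than quotients of trees, on which Kesten's bound could fail. This is exactly where the eigenvalue-$1$ condition of Lemma~\ref{lem:schottky} and the transpose-inverse bookkeeping enter.
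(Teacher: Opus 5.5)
Your proposal is correct and follows the paper's proof essentially step for step. You split each coordinate-plane average into the fixed frequencies $Z_e$ and the rest, identify the orbits on the rest as $4$-regular trees via the no-eigenvalue-$1$ property, apply Kesten's bound, and average over planes using that each nonzero frequency is seen by at least $d-1$ of the $\binom d2$ planes. Your orbit--stabiliser phrasing of freeness (via the isomorphism $g\mapsto g^{-T}$) and your explicit check of (E1) using $S=S^{-1}$ are just slightly more detailed versions of what the paper does.
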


\begin{proof}
    By the Parseval discussion above, it suffices to prove a uniform spectral gap for the frequency-side operator on $L^2(\Omega_n)$. Let $\alpha=\sqrt3/2$ be the norm of the simple random walk operator on the four-regular tree.

    Fix a coordinate two-plane $e=\{p,q\}$.  Let $Q_{e,n}$ be the frequency-side average over the four matrices $s{(e)}$, $s\in S$, and let
    \[
        Z_e=\left\{(v_1,\dots,v_n)\in\Omega_n:
        (v_j)_p=(v_j)_q=0\text{ for every }j\right\};
    \]
    let $E_e$ be the orthogonal projection onto $L^2(Z_e)$.

    The frequencies in $Z_e$ have zero entries in the two coordinates of $e$, so all four moves leave them fixed.  Hence $Q_{e,n}$ is the identity on $L^2(Z_e)$.  We now look at the remaining frequencies as the vertex set of a graph $H$ in which we join each $\mathbf v\in\Omega_n\setminus Z_e$ to $((s{(e)})^T)^{-1}\mathbf v$ for each $s\in S$. Since $S=S^{-1}$, these are undirected edges, and $Q_{e,n}$ is just the simple random walk operator on this graph.

    We claim that each component of $H$ is the infinite four-regular tree.  Suppose, to the contrary, that a non-empty reduced word in the four steps brings some $\mathbf v\in\Omega_n\setminus Z_e$ back to itself.  The same word gives a non-identity element $\gamma\in\Gamma$ such that
    \[
        ((\gamma{(e)})^T)^{-1}\mathbf v=\mathbf v,
    \]
    where $\gamma{(e)}$ means that $\gamma$ is placed in the coordinates $p,q$ and the identity is placed in all other coordinates.  Since $\mathbf v\notin Z_e$, there is some $j$ for which the vector $u=((v_j)_p,(v_j)_q)$ is nonzero.  The displayed equality gives
    \[
        (\gamma^T)^{-1}u=u,
    \]
    or equivalently $\gamma^T u=u$.  It follows that $1$ is an eigenvalue of $\gamma$, contradicting Lemma~\ref{lem:schottky}; therefore there are no such closed reduced walks.  The four possible steps from each frequency are distinct, and the component of any starting frequency is therefore the Cayley graph of the free group on $a,b$, namely the infinite four-regular tree.

    When $d=2$, there is only one coordinate plane and $Z_e$ is empty, so Kesten's theorem already gives the desired contraction.  In higher dimensions, a single coordinate-plane average leaves the frequencies in $Z_e$ untouched, and the remaining step is to average over the coordinate planes and count how often a nonzero frequency is seen. We spell out the details below.

    Kesten's theorem applies on each component of $H$.  Therefore
    \[
        \|Q_{e,n}(I-E_e)h\|_2\le \alpha\|(I-E_e)h\|_2
    \]
    for each $h \in L^2(\Omega_n)$; since $Z_e$ and its complement are invariant under the coordinate-plane action, we also get
    \begin{equation}\label{eq:coordinate-plane-contraction}
        \|Q_{e,n}h\|_2^2
        \le
        \|h\|_2^2-(1-\alpha^2)\|(I-E_e)h\|_2^2.
    \end{equation}

    The full frequency-side average associated with $T_{\mathsf P_d}$ is
    \[
        Q_{n}=\frac1{|\mathcal E_d|}\sum_{e\in\mathcal E_d}Q_{e,n}.
    \]
    By convexity of the squared norm and~\eqref{eq:coordinate-plane-contraction},
    \[
        \|Q_{n}h\|_2^2
        \le
        \|h\|_2^2
        -(1-\alpha^2)
        \frac1{|\mathcal E_d|}\sum_{e\in\mathcal E_d}\|(I-E_e)h\|_2^2.
    \]

    It remains only to count how often a nonzero frequency is detected.  If $\mathbf v\in\Omega_n$, then some coordinate direction $r$ appears nontrivially in one of the vectors $v_j$.  Every two-plane containing $r$ detects $\mathbf v$, so
    \[
        \left|\left\{e\in\mathcal E_d:\mathbf v\notin Z_e\right\}\right|\ge d-1.
    \]
    Since $|\mathcal E_d|=\binom d2$, we have
    \[
        \frac1{|\mathcal E_d|}\sum_{e\in\mathcal E_d}\|(I-E_e)h\|_2^2
        =
        \sum_{\mathbf v\in\Omega_n}|h(\mathbf v)|^2
        \frac{\left|\left\{e:\mathbf v\notin Z_e\right\}\right|}{|\mathcal E_d|}
        \ge
        \frac2d\|h\|_2^2.
    \]
    As $1-\alpha^2=1/4$, this yields
    \[
        \|Q_{n}h\|_2^2
        \le
        \left(1-\frac1{2d}\right)\|h\|_2^2.
    \]
    The contraction factor is therefore, uniformly in $n$, at most $\sqrt{1-1/(2d)}$, proving the proposition.
\end{proof}

We now have everything we need to prove Theorem~\ref{thm:main-tori}.

\begin{proof}[Proof of Theorem~\ref{thm:main-tori}]
    The one-dimensional case is due to McColm, as mentioned earlier.  For every $d\ge2$, Proposition~\ref{prop:torus-gap} shows that $\T^d$ expands uniformly, Lemma~\ref{lem:standard-regular} supplies regularity, and Theorem~\ref{thm:informal-expansion} therefore gives thresholds.
\end{proof}

\section{Cubes}\label{sec:cubes}

The cube follows from the torus by comparison, as we show below.

\begin{corollary}\label{thm:main-cubes}
    For all $d\ge1$, the cube $[0,1]^d$, with its usual Euclidean metric and Lebesgue measure, admits thresholds.
\end{corollary}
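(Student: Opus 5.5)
The plan is to compare the cube directly with the torus of the same dimension and inherit thresholds from Theorem~\ref{thm:main-tori}. I will not try to establish expansion for the cube itself. The comparison rests on two couplings of the point configurations, each preserving the labelling of the vertices. Together they sandwich the cube probabilities between torus probabilities, up to a factor $2$ in the radius. Fix $n$ and a nontrivial monotone property $\mathcal F$ on $[n]$. Write $f_C(r)=\mu^n(\mathcal F_{[0,1]^d}(r))$ and $f_T(r)=\mu^n(\mathcal F_{\T^d}(r))$.

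\textbf{First coupling: $f_C(r)\le f_T(r)$.} Identify $\T^d$ with the fundamental domain $[0,1)^d$. This identification is a measure-preserving bijection between Lebesgue measure on the cube (up to a null set) and Haar measure. For $x,y\in[0,1)^d$ the flat torus distance satisfies $d_{\T^d}(x,y)\le|x-y|$. Hence $G_{[0,1]^d}(\mathbf x,r)\subset G_{\T^d}(\mathbf x,r)$ for every configuration, and monotonicity of $\mathcal F$ gives the inequality.

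\textbf{Second coupling: $f_T(r)\le f_C(2r)$.} Use the coordinatewise folding map $\pi:\T^d\to[0,1]^d$ given by $\pi(x)_i=2\min\{x_i,1-x_i\}$, where $x_i\in[0,1)$. Each coordinate map pushes the uniform measure on the circle forward to the uniform measure on $[0,1]$, so $\pi$ pushes Haar measure forward to Lebesgue measure. Each coordinate map is also $2$-Lipschitz from the circle metric to $[0,1]$. Consequently $|\pi x-\pi y|\le 2\,d_{\T^d}(x,y)$, and applying $\pi$ to all $n$ points gives $G_{\T^d}(\mathbf x,r)\subset G_{[0,1]^d}(\pi\mathbf x,2r)$. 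Since $\pi\mathbf x$ has law $\mu^n$ on the cube, monotonicity yields the inequality.

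\textbf{Conclusion.} Let $r_n$ be a threshold for $\mathcal F_n$ on $\T^d$. Theorem~\ref{thm:main-tori} supplies it for $d\ge2$, and McColm's circle theorem supplies it for $d=1$. If $s_n/r_n\to0$, then $f_C(s_n)\le f_T(s_n)\to0$. If $s_n/r_n\to\infty$, then $(s_n/2)/r_n\to\infty$ as well, so $f_C(s_n)\ge f_T(s_n/2)\to1$. Hence $r_n$ is also a threshold on the cube. For $d=1$ one could alternatively cite McColm's interval result directly.

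The only step needing care is checking that the folding map is genuinely measure-preserving and $2$-Lipschitz with respect to the torus (circle) metric coordinatewise. In particular, the circle distance $\min\{|x_i-y_i|,1-|x_i-y_i|\}$ must dominate half of $|\pi(x)_i-\pi(y)_i|$. This is a short case check, and I expect no real obstacle. The essential point is that we never need a bi-Lipschitz bijection, which cannot exist between cube and torus; two one-sided comparisons in opposite directions suffice, because a threshold is insensitive to constant factors in the radius.
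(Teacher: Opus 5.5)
Your proposal is correct and follows the paper's route: the paper proves the same sandwich $\mu_{C^d}^n(\mathcal F_{C^d}(r))\le\mu_{\T^d}^n(\mathcal F_{\T^d}(r))\le\mu_{C^d}^n(\mathcal F_{C^d}(2r))$, using the fundamental-domain identification and the coordinatewise folding map $\pi(x)_j=2\,\dist_{\T^1}(x_j,0)$ (which equals your $2\min\{x_j,1-x_j\}$). It then inherits torus thresholds, with McColm's theorem covering $d=1$. Your explicit check that a torus threshold $r_n$ is also a cube threshold only spells out a step the paper leaves implicit.
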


We only use the elementary fact that the torus is the cube with opposite faces glued together.  Gluing can only make distances smaller; folding the torus back into the cube makes distances larger by at most a factor of $2$. We spell out the details below.

Write $C^d=[0,1]^d$ for the cube with Euclidean metric and normalised Lebesgue measure.  The following trivial sandwiching lemma gives Corollary~\ref{thm:main-cubes} from Theorem~\ref{thm:main-tori}.

\begin{lemma}\label{lem:cube-torus-sandwich}
    For every monotone graph property $\mathcal F$ on $[n]$ and every $r\ge0$,
    \begin{equation}\label{eq:cube-torus-sandwich}
        \mu_{C^d}^n(\mathcal F_{C^d}(r))
        \le
        \mu_{\T^d}^n(\mathcal F_{\T^d}(r))
        \le
        \mu_{C^d}^n(\mathcal F_{C^d}(2r)).
    \end{equation}
\end{lemma}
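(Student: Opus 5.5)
Both inequalities in \eqref{eq:cube-torus-sandwich} come from comparing the two metrics through one coupling. We identify $\T^d$ with $[0,1)^d$ via the quotient map $\pi:[0,1]^d\to\T^d$, which pushes Lebesgue measure on $C^d$ forward to Haar measure on $\T^d$. Sampling $\mathbf x\in (C^d)^n$ from $\mu_{C^d}^n$ and applying $\pi$ coordinatewise then produces a sample of $\mu_{\T^d}^n$. For each comparison of metrics we check which graph contains which, and then use monotonicity of $\mathcal F$.

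\textbf{Left inequality.} The flat torus distance is
\[
d_{\T^d}(\pi x,\pi y)=\min_{k\in\Z^d}|x-y+k|\le|x-y|,
\]
so gluing only shortens distances. Hence $G_{C^d}(\mathbf x,r)\subset G_{\T^d}(\pi\mathbf x,r)$ for every configuration. By monotonicity, $\mathbf x\in\mathcal F_{C^d}(r)$ implies $\pi\mathbf x\in\mathcal F_{\T^d}(r)$. Since $\pi$ is measure preserving, this gives
\[
\mu_{C^d}^n(\mathcal F_{C^d}(r))\le\mu_{\T^d}^n(\mathcal F_{\T^d}(r)).
\]

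\textbf{Right inequality.} Here we need a map in the other direction that stretches distances by at most $2$ while still pushing Haar measure forward to Lebesgue measure. The direct inverse of $\pi$ does not work, because it tears the torus apart. We use the folding map $f:\T^d\to C^d$, defined coordinatewise by $f_i(x)=2\min(x_i,1-x_i)$ for $x_i\in[0,1)$.
\begin{itemize}
\item The tent map $u\mapsto 2\min(u,1-u)$ pushes uniform measure on $\R/\Z$ to uniform measure on $[0,1]$, since each point has two preimages, each with Jacobian $2$. Taking products, $f$ pushes $\mu_{\T^d}$ to $\mu_{C^d}$.
\item Each coordinate map is $2$-Lipschitz with respect to circle distance. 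Indeed, $\min(u,1-u)$ is exactly the distance from $u$ to $0$ on $\R/\Z$, so by the triangle inequality it is $1$-Lipschitz there.
\item The circle distance between the $i$th coordinates is at most the $i$th component of a minimizing lift of $x-y$. Summing squares therefore gives $|f(x)-f(y)|\le 2\,d_{\T^d}(x,y)$.
\end{itemize}
Consequently $G_{\T^d}(\mathbf y,r)\subset G_{C^d}(f\mathbf y,2r)$, so $\mathbf y\in\mathcal F_{\T^d}(r)$ implies $f\mathbf y\in\mathcal F_{C^d}(2r)$. This gives
\[
\mu_{\T^d}^n(\mathcal F_{\T^d}(r))\le\mu_{\T^d}^n\bigl(f^{-1}(\mathcal F_{C^d}(2r))\bigr)=\mu_{C^d}^n(\mathcal F_{C^d}(2r)).
\]

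The main obstacle is choosing this folding map: it must be measure preserving onto the cube and have Lipschitz constant exactly $2$. Once the coordinatewise Lipschitz bound against the circle metric is in place, both inequalities are immediate.
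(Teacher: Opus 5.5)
Your proof is correct and follows the paper's argument. The left inequality comes from viewing cube samples in the torus, where distances can only shrink, and the right inequality from the same coordinatewise folding map $x_j\mapsto 2\dist_{\T^1}(x_j,0)$, which pushes Haar measure to Lebesgue measure and is $2$-Lipschitz; you simply spell out the measure-preservation and Lipschitz checks in more detail than the paper does.
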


\begin{proof}
    For the first inequality, sample points in the fundamental cube $[0,1)^d$ and view them once in the cube and once in the torus.  Torus distance is never larger than cube distance, so every edge present in the cube at radius $r$ is also present in the torus at radius $r$.  Since $\mathcal F$ is monotone, the first inequality follows.

    For the other direction, fold the torus, as in Figure~\ref{fig:cube-folding}, into the cube by $\pi:\T^d\to C^d$ defined by
    \[
        \pi(x)_j=2\,\dist_{\T^1}(x_j,0).
    \]
    The push-forward of Haar measure under $\pi$ is Lebesgue measure on the cube.  Also,
    \[
        \|\pi(x)-\pi(y)\|_2\le 2d_{\T^d}(x,y)
    \]
    for all $x,y\in\T^d$.  Hence every torus edge of radius $r$ becomes a cube edge of radius $2r$ after folding.  Monotonicity now gives the second inequality.
\end{proof}

\begin{figure}[!t]
\centering
\begin{tikzpicture}[scale=1.0,>=Latex,every node/.style={font=\small}]
    \begin{scope}[xshift=0cm]
        \draw[thick] (0,0) circle (1.25);
        \coordinate (x) at (115:1.25);
        \coordinate (y) at (35:1.25);
        \draw[blue!70!black,very thick] (35:1.25) arc[start angle=35,end angle=115,radius=1.25];
        \filldraw[fill=blue!70!black,draw=white,line width=0.4pt] (x) circle (3pt) node[above left=1pt] {$x$};
        \filldraw[fill=blue!70!black,draw=white,line width=0.4pt] (y) circle (3pt) node[above right=1pt] {$y$};
        \filldraw[fill=white,draw=black,line width=0.5pt] (0,-1.25) circle (2.3pt);
        \node at (0,0.25) {$d_{\T^1}(x,y)\le r$};
        \node at (0,-1.85) {$\T^1$};
    \end{scope}

    \draw[->,very thick] (2.0,0) -- (3.35,0);
    \node at (2.68,0.35) {$\pi$};

    \begin{scope}[xshift=4.25cm]
        \draw[thick] (0,0) -- (3.25,0);
        \draw[thick] (0,-0.08) -- (0,0.08);
        \draw[thick] (3.25,-0.08) -- (3.25,0.08);
        \coordinate (px) at (0.75,0);
        \coordinate (py) at (2.35,0);
        \draw[blue!70!black,very thick] (0.75,0.20) -- (2.35,0.20);
        \filldraw[fill=blue!70!black,draw=white,line width=0.4pt] (px) circle (3pt) node[below=2pt] {$\pi(x)$};
        \filldraw[fill=blue!70!black,draw=white,line width=0.4pt] (py) circle (3pt) node[below=2pt] {$\pi(y)$};
        \node at (1.625,0.55) {$\le 2r$};
        \node at (1.625,-0.95) {$[0,1]$};
    \end{scope}
\end{tikzpicture}
\caption{The folding map in one coordinate.}
\label{fig:cube-folding}
\end{figure}

\section{Spheres}\label{sec:spheres}

We now turn to spheres.  Rotations by themselves do nothing to a geometric graph: if all points are rotated together, all pairwise distances stay exactly the same.  The maps we need must preserve measure, but they must also change distances in a controlled way.

The proof goes in two steps.  For the first step, we pass from $S^2$ to $\T^2$ through a low-dimensional coincidence. In the second step, we climb from $S^{m-1}$ to $S^m$: expand inside each latitude sphere, rotate the picture, and do it again. We are initially fortunate because we cannot start from $S^1$; while $S^1$ does not expand uniformly, it is also the already-addressed $\T^1$.

\textbf{The two-sphere.} Write
\[
    Q=\T^2/\{x\sim -x\}
\]
for the flat pillowcase.  As shown in Figure~\ref{fig:pillowcase}, the involution $x\mapsto -x$ is orientation-preserving and has four fixed points.  Away from those fixed points, $\T^2\to Q$ is a two-fold cover, and computing the Euler characteristic shows the underlying surface to be $S^2$.  The flat metric has cone angle $\pi$ at each of the four images of the fixed points.

With these facts in hand, we have what we need to pass from the torus to the pillowcase.

\begin{lemma}\label{lem:pillowcase-expands}
    The pillowcase $Q$, equipped with the quotient metric induced by the flat metric on $\T^2$ and the push-forward of Haar measure, expands uniformly.
\end{lemma}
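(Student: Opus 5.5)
The plan is to descend the torus expansion of Proposition~\ref{prop:torus-gap} through the quotient map $\pi:\T^2\to Q$. The key observation is that every $A\in\SL_2(\Z)$ commutes with $x\mapsto -x$. Hence $A$ induces a well-defined bijection $\bar A$ of $Q$ by $\bar A(\pi x)=\pi(Ax)$; its inverse is $\overline{A^{-1}}$. We take $\mathsf P$ to be the push-forward of $\mathsf P_2$ under $A\mapsto\bar A$, and we check (E1) and (E2) for it in turn.

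First, measure preservation and distortion. Since $\pi_*$ of Haar measure is the measure on $Q$ and $A$ preserves Haar measure, $\bar A_*\mu_Q=\pi_*A_*\mathrm{Haar}=\mu_Q$. For distortion, we use the quotient metric $d_Q(\pi x,\pi y)=\min\{d_{\T^2}(x,y),d_{\T^2}(x,-y)\}$. Then
\[
d_Q(\bar A\pi x,\bar A\pi y)=\min\{d(Ax,Ay),d(Ax,-Ay)\}\le \|A\|_{\op}\,d_Q(\pi x,\pi y),
\]
because $-Ay=A(-y)$ and $A$ is $\|A\|_{\op}$-Lipschitz on $\T^2$. The latter holds since $A$ maps $\Z^2$ to itself, so the minimum over lattice translates is controlled. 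The same argument applies to $A^{-1}$, so (E1) holds with constant $K_2$.

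Second, the spectral gap, by lifting functions. Given $f\in L^2_0(Q^n)$, let $\tilde f=f\circ\pi^{\otimes n}$ on $(\T^2)^n$. Because the map $\pi^{\otimes n}$ pushes Haar measure to $\mu_Q^n$, we have $\|\tilde f\|_2=\|f\|_2$, and $\tilde f$ has mean zero. Moreover, the intertwining $\bar A\circ\pi=\pi\circ A$ gives $\widetilde{T_{\mathsf P}f}=T_{\mathsf P_2}\tilde f$. Proposition~\ref{prop:torus-gap} then yields
\[
\|T_{\mathsf P}f\|_2=\|T_{\mathsf P_2}\tilde f\|_2\le(1-\rho)\|f\|_2,
\]
with $\rho=1-\sqrt{3/4}$, uniformly in $n$. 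This gives (E2).

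I expect the only delicate step to be the Lipschitz bound. It requires a careful statement that the quotient metric is given by the formula above, i.e.\ the infimum over chains reduces to a single reflection. This follows because $\{\pm1\}$ acts by isometries on $\T^2$, so $d_Q$ is the orbit distance. One must also ensure that $\bar A$ is a genuine bijection, including at the four cone points; this is fine because $A$ permutes the $2$-torsion points, which are exactly the fixed points of the involution. The remaining checks are routine. The lifting trick avoids any Fourier analysis on $Q$ itself, since $L^2(Q^n)$ embeds isometrically into the $(\pm1)^n$-invariant part of $L^2((\T^2)^n)$, and that subspace is preserved by $T_{\mathsf P_2}$.
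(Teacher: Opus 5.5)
Your proposal is correct and follows the paper's proof. You descend the four automorphisms of Proposition~\ref{prop:torus-gap} to $Q$ using that they commute with $x\mapsto -x$, note that passing to the quotient does not increase Lipschitz constants, and get the spectral gap by lifting $L^2(Q^n)$ isometrically onto the sign-invariant subspace of $L^2((\T^2)^n)$, which the torus averaging operator preserves. Your orbit-distance formula for $d_Q$ and the intertwining identity $\widetilde{T_{\mathsf P}f}=T_{\mathsf P_2}\tilde f$ simply spell out details the paper leaves implicit.
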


\begin{proof}
    Use the four torus automorphisms from Proposition~\ref{prop:torus-gap}.  Each of them commutes with the map $x\mapsto -x$, so each one descends to a measure-preserving map of the quotient $Q$.  Passing to the quotient cannot increase the Lipschitz constant.

    The spectral gap also passes to the quotient.  Indeed, pulling a function on $Q^n$ back to $(\T^2)^n$ identifies $L^2(Q^n)$ with the subspace of $L^2((\T^2)^n)$ consisting of functions that are unchanged when any of the $n$ coordinates is replaced by its negative.  The torus averaging operator preserves this subspace.  Therefore the torus contraction on mean-zero functions restricts to the same contraction on $Q^n$.
\end{proof}

To pass from the pillowcase to the sphere, we need to replace the flat pillowcase metric by the round metric.  The only point is to do this without changing measure in an uncontrolled way. See the part on volume-preserving diffeomorphisms in Banyaga's book~\citep{Banyaga1997} for a proof of the following useful fact.

\begin{theorem}\label{thm:moser-volume-form}
    Let $X$ be a compact connected smooth manifold, and let $\omega_0$ and $\omega_1$ be smooth positive volume forms on $X$ with the same total volume.  Then there is a diffeomorphism $\psi:X\to X$ such that $\psi_*\omega_0=\omega_1$. \qed
\end{theorem}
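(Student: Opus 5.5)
The plan is Moser's path method. Throughout, $X$ is closed (compact without boundary), as in the intended application to $S^2$. Assume first that $X$ is oriented and that $\omega_0,\omega_1$ are positive volume forms. Consider the straight-line path
\[
\omega_t=(1-t)\,\omega_0+t\,\omega_1,\qquad t\in[0,1].
\]
A convex combination of positive top forms is positive, so every $\omega_t$ is a volume form, and $\int_X\omega_t$ does not depend on $t$. The aim is to build an isotopy $\psi_t$ with $\psi_0=\mathrm{id}$ and $\psi_t^*\omega_t=\omega_0$ for all $t$. Then $\psi=\psi_1$ satisfies $\psi^*\omega_1=\omega_0$, which is the same as $\psi_*\omega_0=\omega_1$.

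\textbf{Step 1: the difference is exact.} For a closed, connected, oriented $m$-manifold, integration gives an isomorphism $H^m_{\mathrm{dR}}(X)\cong\R$. Since $\int_X(\omega_1-\omega_0)=0$, there is a smooth $(m-1)$-form $\beta$ with
\[
\omega_1-\omega_0=d\beta.
\]
This is the only step with real topological content. It is where connectedness and the equal-volume hypothesis are used, and I expect it to be the main obstacle, in the sense that everything afterwards is formal. I would simply cite the top-degree de Rham theorem (Poincar\'e duality) for this.

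\textbf{Step 2: the vector field.} Each $\omega_t$ is nondegenerate. Therefore contraction $Y\mapsto\iota_Y\omega_t$ is a linear isomorphism from vector fields to $(m-1)$-forms, and it depends smoothly on $t$. Define the time-dependent vector field $Y_t$ by
\[
\iota_{Y_t}\omega_t=-\beta.
\]
Since $X$ is compact, $Y_t$ integrates to a flow $\psi_t$ defined for all $t\in[0,1]$, with $\psi_0=\mathrm{id}$.

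\textbf{Step 3: the pullback is constant.} Differentiate and use Cartan's formula together with $d\omega_t=0$:
\[
\frac{d}{dt}\psi_t^*\omega_t=\psi_t^*\Bigl(\mathcal L_{Y_t}\omega_t+\frac{d\omega_t}{dt}\Bigr)=\psi_t^*\bigl(d\,\iota_{Y_t}\omega_t+d\beta\bigr)=\psi_t^*(-d\beta+d\beta)=0.
\]
Hence $\psi_t^*\omega_t=\omega_0$ for all $t$, and taking $t=1$ finishes the oriented case.

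\textbf{Non-orientable case.} If $X$ is not orientable, interpret $\omega_0,\omega_1$ as smooth positive densities. I would run the same argument with twisted forms (forms valued in the orientation bundle), whose top cohomology is again $\R$ via integration when $X$ is connected. Alternatively, one could work equivariantly on the orientation double cover, averaging the primitive $\beta$ over the deck involution so that the resulting flow descends to $X$. The paper only needs this theorem for $S^2$, so the oriented case suffices.
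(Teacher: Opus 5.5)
Your proposal is correct and is the standard Moser path-method proof, which is exactly the argument in the source the paper relies on: the paper gives no proof of its own, only a citation to Banyaga's book, and restricting to closed oriented $X$ costs nothing since the result is only used for $S^2$. One small precision for your non-orientable aside: the lifted forms are odd under the orientation-reversing deck involution $\tau$ ($\tau^*\tilde\omega_i=-\tilde\omega_i$), so you must anti-symmetrise the primitive to $\tfrac12(\tilde\beta-\tau^*\tilde\beta)$, because the plain average $\tfrac12(\tilde\beta+\tau^*\tilde\beta)$ is closed and no longer a primitive of $\tilde\omega_1-\tilde\omega_0$.
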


With this volume form theorem in hand, we can transfer uniform expansion.

\begin{figure}[!t]
\centering
\begin{tikzpicture}[scale=0.95,>=Latex,every node/.style={font=\small}]
    \begin{scope}[xshift=0cm]
        \draw[thick,rounded corners=2pt] (0,0) rectangle (3,3);
        \draw[->,thick] (0.35,3.25) -- (1.25,3.25);
        \draw[->,thick] (1.75,-0.25) -- (2.65,-0.25);
        \draw[->,thick] (-0.25,2.65) -- (-0.25,1.75);
        \draw[->,thick] (3.25,1.25) -- (3.25,0.35);
        \draw[gray!55,dashed] (1.5,0.35) -- (1.5,2.65);
        \draw[gray!55,dashed] (0.35,1.5) -- (2.65,1.5);
        \draw[black,line width=0.55pt] (1.42,1.5) -- (1.58,1.5);
        \draw[black,line width=0.55pt] (1.5,1.42) -- (1.5,1.58);
        \foreach \xx/\yy in {0/0,3/0,0/3,3/3}
            \filldraw[fill=blue!70!black,draw=white,line width=0.4pt] (\xx,\yy) circle (3pt);
    \end{scope}

    \draw[->,very thick] (3.85,1.5) -- (5.05,1.5);
    \node at (4.45,1.9) {$x\sim -x$};

    \begin{scope}[xshift=5.85cm]
        \path[draw=black,fill=gray!12,thick]
            (0,0.45)
            .. controls (0,0.05) and (0.35,-0.2) .. (0.8,-0.2)
            -- (2.2,-0.2)
            .. controls (2.65,-0.2) and (3.0,0.05) .. (3.0,0.45)
            -- (3.0,2.55)
            .. controls (3.0,2.95) and (2.65,3.2) .. (2.2,3.2)
            -- (0.8,3.2)
            .. controls (0.35,3.2) and (0,2.95) .. (0,2.55)
            -- cycle;
        \draw[thick] (0.45,1.52) .. controls (1.05,1.25) and (1.95,1.25) .. (2.55,1.52);
        \draw[thick] (0.45,1.68) .. controls (1.05,1.95) and (1.95,1.95) .. (2.55,1.68);
        \foreach \xx/\yy in {0.35/0.18,2.65/0.18,0.35/2.82,2.65/2.82}
            \fill[blue!70!black] (\xx,\yy) circle (2.4pt);
        \node at (1.5,-0.75) {$Q$};
    \end{scope}
\end{tikzpicture}
\caption{A schematic picture of the pillowcase quotient.}
\label{fig:pillowcase}
\end{figure}

\begin{proposition}\label{prop:S2-gap}
    The two-sphere $S^2$ expands uniformly.
\end{proposition}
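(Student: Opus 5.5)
The plan is to combine Lemma~\ref{lem:pillowcase-expands} with the transfer principle Lemma~\ref{lem:bilip-expansion-transfer}. What is needed is a measure-preserving bijection $\psi:Q\to S^2$ that is bi-Lipschitz, where $Q$ carries the flat quotient metric and push-forward Haar measure, and $S^2$ carries the geodesic metric and normalised area. Since $Q$ has $(K,\rho)$-expansion for some finite $K$ and $\rho>0$, such a $\psi$ with constants $0<a\le b$ gives $S^2$ $((b/a)K,\rho)$-expansion. That is exactly uniform expansion.

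The first step is to build a bi-Lipschitz homeomorphism $\psi_0:Q\to S^2$, ignoring measure. I would use an explicit model. The pillowcase is two unit squares glued along their boundary; equivalently, it is the double of the square $[0,1/2]\times[0,1]$ folded appropriately, with four cone points of angle $\pi$. Map it piecewise-linearly onto the boundary of a regular tetrahedron, which is also a flat sphere with four cone points of angle $\pi$. Then radially project the tetrahedron boundary onto the round sphere. Radial projection from the centroid of a convex polytope to a sphere is bi-Lipschitz. A piecewise-linear homeomorphism between compact polyhedral surfaces is also bi-Lipschitz for the intrinsic metrics. Composing gives $\psi_0$ with constants $a_0\le b_0$.

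The second step is to correct the measure. The push-forward $(\psi_0)_*\mu_Q$ has a density with respect to area on $S^2$. The density is bounded above and below, but it is only piecewise smooth, so Theorem~\ref{thm:moser-volume-form} does not apply verbatim. I expect this to be the main obstacle. I see two ways around it, and would try the first.

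\begin{enumerate}[(i)]
\item Arrange that $\psi_0$ is smooth away from the four cone points and the density is smooth and positive there. Near the cone points, use the local model: a cone of angle $\pi$ is the quotient of a flat disc by $z\mapsto -z$, and the map $z\mapsto z^2/|z|$ to the flat plane is bi-Lipschitz and preserves area up to a constant factor. One can then get a smooth positive density on a compact smooth $S^2$ with matching total mass, and apply Theorem~\ref{thm:moser-volume-form}. This produces a diffeomorphism $\psi_1$ of $S^2$ with $(\psi_1)_*\bigl((\psi_0)_*\mu_Q\bigr)$ equal to normalised area.
\item Use a Dacorogna--Moser type result: densities bounded above and below admit bi-Lipschitz, measure-transporting homeomorphisms.
\end{enumerate}

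A diffeomorphism of a compact manifold is bi-Lipschitz, so $\psi=\psi_1\circ\psi_0$ is a measure-preserving bi-Lipschitz bijection. Lemma~\ref{lem:bilip-expansion-transfer} then finishes the proof.

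The care goes into the smoothing near the four cone points. The structure is simply pillowcase expansion, then a bi-Lipschitz identification with the round sphere, then a Moser correction of the measure, then transfer.
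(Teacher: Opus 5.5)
Your route (i) is essentially the paper's proof. The paper also straightens each cone point with the angle-doubling map $(r,\theta)\mapsto(r,2\theta)$, which is your $z\mapsto z^2/|z|$. It then regards $Q$ as a smooth sphere with a smooth positive area form and a smooth Riemannian metric bi-Lipschitz to the flat one, applies Theorem~\ref{thm:moser-volume-form}, and finishes with Lemma~\ref{lem:bilip-expansion-transfer}.

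One correction is needed in (i). Your tetrahedron map $\psi_0$ cannot be ``arranged'' to be smooth away from the cone points: radial projection makes its derivative jump across every edge. So $(\psi_0)_*\mu_Q$ is not a smooth density, and Theorem~\ref{thm:moser-volume-form} does not apply to it. In route (i) you should drop $\psi_0$. Instead, give $Q$ the smooth structure defined by the angle-doubling charts and take any diffeomorphism from this smooth sphere to $S^2$; one exists because smooth structures on surfaces are unique up to diffeomorphism. As in the paper, its bi-Lipschitz bound comes from comparing the flat cone metric with a smooth Riemannian metric in those charts. The tetrahedron is then only an elaborate proof that $Q$ is a topological sphere.

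Route (ii) is false as stated. Burago--Kleiner and McMullen constructed continuous densities on the square, bounded above and below, that are not the Jacobian of any bi-Lipschitz map. Dacorogna--Moser also needs H\"older-continuous densities, which mere boundedness does not give.

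Your explicit model can nevertheless be completed along these lines, and this gives a genuinely different proof that never smooths the cone points. The boundary $\partial P$ of the regular tetrahedron $P$ is itself a pillowcase $\R^2/(\Lambda\rtimes\{\pm1\})$ for a hexagonal lattice $\Lambda$. So a linear map carrying $\Z^2$ onto $\Lambda$ descends to a piecewise-linear, bi-Lipschitz, measure-preserving map $Q\to\partial P$. Every face lies at the same distance $h$ from the centre, so radial projection produces the density $\varrho^3/h$ with respect to round area, where $\varrho$ is the radial function of $P$. This density is Lipschitz, not merely bounded. A $C^{0,\alpha}\to C^{1,\alpha}$ Dacorogna--Moser-type theorem on $S^2$ then supplies a bi-Lipschitz measure correction.

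The trade-off is this: the paper's route stays fully smooth, so the classical Moser theorem suffices. The tetrahedron route gives an explicit bi-Lipschitz model but needs a low-regularity Jacobian theorem.
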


\begin{proof}
    Lemma~\ref{lem:pillowcase-expands} gives uniform expansion for the pillowcase.  We now identify the pillowcase with the round sphere up to a measure-preserving bi-Lipschitz change of variables.

    Away from the four cone points, the pillowcase is already a smooth flat surface.  Near a cone point, use polar coordinates on the cone of angle $\pi$.  The angle-doubling map
    \[
        (r,\theta)\mapsto (r,2\theta)
    \]
    straightens the cone into an ordinary Euclidean disc, with only bounded bi-Lipschitz distortion.  It also sends cone area to a constant multiple of Euclidean area.

    After doing this at the four cone points, we may view $Q$ as a smooth sphere equipped with a smooth positive area form and with a smooth Riemannian metric bi-Lipschitz to the pillowcase metric.  By Theorem~\ref{thm:moser-volume-form}, there is a diffeomorphism from this smooth sphere to the round sphere that sends the area form to round area.  Since the sphere is compact, this diffeomorphism is bi-Lipschitz after comparing the two smooth metrics.

    It follows that there is a measure-preserving bi-Lipschitz bijection from $Q$ to the round sphere $S^2$.  The transfer principle, Lemma~\ref{lem:bilip-expansion-transfer}, gives uniform expansion for $S^2$.
\end{proof}

\textbf{Suspending expansion.}
We now prove the higher-dimensional case by induction. Our goal is to prove the following.

\begin{proposition}\label{prop:sphere-suspension}
    Let $m\ge3$.  If $S^{m-1}$ expands uniformly, then so does $S^m$.
\end{proposition}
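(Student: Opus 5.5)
The plan is to build the maps for $S^m$ out of the maps for $S^{m-1}$, acting on latitude spheres, and then to average over two tilted copies of this construction.

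\textbf{Fibrewise maps and distortion.} Write a point of $S^m\subset\R^{m+1}$ as $(\cos\theta\,y,\sin\theta)$ with $y\in S^{m-1}$ and $\theta\in[-\pi/2,\pi/2]$. In these coordinates the uniform measure is the normalised product of $\cos^{m-1}\theta\,d\theta$ with the uniform measure on $S^{m-1}$. Let $\mathsf P$ witness $(K,\rho)$-expansion of $S^{m-1}$. For $\phi\in\supp\mathsf P$ define
\[
    \Phi(\cos\theta\,y,\sin\theta)=(\cos\theta\,\phi(y),\sin\theta).
\]
This map is a measure-preserving bijection of $S^m$, and it fixes both poles. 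To bound its distortion I will compare chordal distances, since geodesic and chordal distances on $S^m$ agree up to a factor of $\pi/2$. The squared chordal distance is
\[
    (\sin\theta-\sin\theta')^2+|\cos\theta\,y-\cos\theta'\,y'|^2 .
\]
The second term is controlled using
\[
    |\cos\theta\,\phi y-\cos\theta'\,\phi y'|\le|\cos\theta-\cos\theta'|+\cos\theta\,|\phi y-\phi y'|,
\]
together with the reverse inequality for the original pair. This gives $\Lip(\Phi),\Lip(\Phi^{-1})\le CK$ with $C$ absolute. The collapse of latitude spheres near the poles is harmless, because every latitude sphere is scaled by the same factor $\cos\theta$ before and after applying $\phi$.

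\textbf{Partial spectral gap.} Let $E$ be the conditional expectation on $L^2((S^m)^n)$ onto functions of the latitudes $(\theta_1,\dots,\theta_n)$ alone. For fixed latitudes, the operator $T_\Phi$ (averaging over $\Phi$ with $\phi\sim\mathsf P$) acts on the fibre $(S^{m-1})^n$ exactly as $T_{\mathsf P}$. Hence $T_\Phi$ commutes with $E$, is the identity on the range of $E$, and satisfies $\|T_\Phi(I-E)f\|_2\le(1-\rho)\|(I-E)f\|_2$. Here I apply (E2) fibrewise, using that $(I-E)f$ has mean zero on almost every fibre. Now fix a rotation $R$ of $S^m$ that moves the north pole to a point that is neither the north nor the south pole. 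Put $T'=RT_\Phi R^{-1}$, with corresponding projection $E'=RER^{-1}$. I take the averaging measure to be uniform over the maps $\Phi$ and $R\Phi R^{-1}$. The distortion stays at most $CK$, since $R$ is an isometry. Exactly as in the proof of Proposition~\ref{prop:torus-gap}, convexity gives
\[
    \|\tfrac12(T_\Phi+T')f\|_2^2\le\|f\|_2^2-\tfrac{c}{2}\left(\|(I-E)f\|_2^2+\|(I-E')f\|_2^2\right),
    \qquad c=1-(1-\rho)^2 .
\]

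\textbf{Uniform angle between the two projections, the main obstacle.} It remains to show that $\|E'E f\|_2\le\lambda\|f\|_2$ for mean-zero $f$, with $\lambda<1$ independent of $n$. Granting this, write $f=(I-E')f+E'(I-E)f+E'Ef$, which gives
\[
    \|f\|_2\le\|(I-E')f\|_2+\|(I-E)f\|_2+\lambda\|f\|_2 .
\]
Hence the bracket above is at least $\tfrac12(1-\lambda)^2\|f\|_2^2$, and this yields a uniform spectral gap. For the bound on $E'E$ I use the tensor structure. Both $E$ and $E'$ act coordinatewise, so $E'E=\bigotimes_{j=1}^n A$, where $A=E_1'E_1$ is the one-point operator on $L^2(S^m)$. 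The space $L^2_0((S^m)^n)$ splits orthogonally into the sums, over nonempty sets $J\subset[n]$, of tensor products with the mean-zero part in the coordinates of $J$ and constants elsewhere. Since $A$ fixes constants and preserves mean zero, each piece is contracted by at least $\|A|_{L^2_0(S^m)}\|_{\op}$. So it suffices to show that this one-point norm is $<1$.

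For this, $A$ composes two zonal averages. Viewed on functions of the height with respect to the north pole, $E_1E_1'E_1$ is an integral operator with a bounded (indeed continuous away from a null set) kernel, because $m\ge3$ and the axes are distinct and non-antipodal. It is therefore compact. By compactness, the norm equals $1$ only if some mean-zero function is zonal about both axes. Such a function is invariant under the group generated by the stabilisers of the two axes; since the axes are neither equal nor antipodal, this group acts transitively on $S^m$, so the function is constant and hence zero. This kernel or compactness check is the step I expect to need the most care. If it proves delicate, a fallback is to diagonalise $A$ via spherical harmonics: zonal projections act on degree-$\ell$ harmonics through Legendre or Gegenbauer values $P_\ell(\cos\alpha)$, where $\alpha$ is the angle between the axes, and one then bounds $\sup_{\ell\ge1}|P_\ell(\cos\alpha)|<1$.
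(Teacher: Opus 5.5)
Your argument is correct. It uses the same ingredients as the paper: suspend the maps of $S^{m-1}$ along latitudes, contract everything orthogonal to functions of the heights, and bring in a second, tilted height whose conditional expectation is uniformly transverse to the first. You assemble them differently, though.

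The paper composes. It takes $Q_2Q_1=P_2^kP_1^k$ with $k$ large, so that $Q_2Q_1f$ lies within $2\delta$ of $E_2E_1f$. It then relies on the explicit maximal correlation $c_m=1/m$ of two perpendicular heights, together with Witsenhausen's tensorisation theorem.

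You instead do three things differently:
\begin{enumerate}[label=(\roman*)]
\item You average single suspended maps with their rotated conjugates, in the spirit of the torus proof. You then turn transversality into the bound $\|(I-E)f\|_2^2+\|(I-E')f\|_2^2\ge\tfrac12(1-\lambda)^2\|f\|_2^2$ via the decomposition $f=(I-E')f+E'(I-E)f+E'Ef$.
\item You prove the tensorisation by hand, from $E'E=A^{\otimes n}$ and the orthogonal splitting indexed by $J\subset[n]$.
\item You show $\|A|_{L^2_0(S^m)}\|_{\op}<1$ softly, for any tilt, using compactness and the transitivity of the group generated by two axis stabilisers.
\end{enumerate}
Your route keeps the distortion at $O(K)$, whereas the paper's compositions have distortion growing with $k$. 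It also needs neither special functions nor the citation to Witsenhausen. The paper's route gives an explicit contraction factor, $1/m+2\delta$, with almost no functional analysis.

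One justification needs repair: your bounded-kernel argument for compactness does not work as stated. Relative to $\nu_m\otimes\nu_m$, the kernel of $E_1E_1'E_1$ blows up like $(1-x^2)^{-1/2}$ as $x=x'\to\pm1$. Boundedness relative to Lebesgue measure does not by itself give compactness on $L^2(\nu_m)$ once $m\ge4$.

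Compactness does hold for $m\ge3$, for a different reason: the one-step operator $F\mapsto\mathbb E[F(X)\mid Y']$ is Hilbert--Schmidt. Let $p$ be the joint Lebesgue density of the two heights and $p_X$, $p_{Y'}$ its marginals. Then $\iint p^2/(p_Xp_{Y'})\,dx\,dy<\infty$. The only delicate spots are the four points where the support ellipse touches the boundary of $[-1,1]^2$, and there the integrand's slice integrals behave like $(1-|x|)^{(m-3)/2}$.

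Alternatively, your spherical-harmonic fallback settles the matter directly. It gives $\lambda=\sup_{\ell\ge1}|P_\ell(\cos\alpha)|$, which for perpendicular axes is exactly the paper's $1/m$.
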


Of course, Theorem~\ref{thm:main-spheres} follows immediately from Proposition~\ref{prop:sphere-suspension}.

\begin{proof}[Proof of Theorem~\ref{thm:main-spheres} assuming Proposition~\ref{prop:sphere-suspension}]
    $\T^1$ handles $S^1$.  Proposition~\ref{prop:S2-gap} handles $S^2$.  Proposition~\ref{prop:sphere-suspension} then gives $S^m$ for every $m\ge3$ by induction.  The spaces are regular by Lemma~\ref{lem:standard-regular}, so Theorem~\ref{thm:informal-expansion} gives thresholds for all spheres.
\end{proof}

The proof of Proposition~\ref{prop:sphere-suspension} requires one further geometric input, namely a transversality statement on the sphere for two perpendicular height coordinates; we outline this below.

For a fixed $m\ge2$, let $Z=(z_1,\dots,z_{m+1})$ be uniform on $S^m\subset\R^{m+1}$, put
\[
    h_1(z)=z_1,
    \quad
    h_2(z)=z_2,
    \quad
    X=h_1(Z),
    \quad
    Y=h_2(Z),
\]
and let $\nu_m$ be the common law of $X$ and $Y$.  Define $c_m$ to be the Hirschfeld--Gebelein--R\'enyi~\citep{Hirschfeld1935,Gebelein1941,Renyi1959} maximal correlation of the two one-dimensional height variables $X$ and $Y$ defined by
\[
    c_m
    =
    \sup
    \left\|\mathbb E[F(X)\mid Y]\right\|_2,
\]
where the supremum is taken over all $F \in L^2_0([-1,1],\nu_m)$ with $\|F\|_2=1$.

All we need is the fact that $c_m < 1$ for each $m \ge 2$, and a short calculation using the standard change of basis shows this.  A function of a single height coordinate is a zonal function on the sphere.  The spherical-harmonic decomposition is a change of basis for such functions: instead of using an arbitrary function of the height, decompose it into its zonal harmonic pieces, one for each harmonic degree, or equivalently into Gegenbauer polynomials for the one-dimensional height measure. Rotational invariance then implies that conditioning on a perpendicular height coordinate does not mix harmonic degrees; on each degree it acts by a scalar. We are therefore left with a diagonal calculation; standard formulae for the Gegenbauer polynomials imply that $c_m\le 1/m<1$. In fact, it turns out that $c_m = 1/m$ for each $m \ge 2$; see the chapter on the symmetry properties of the Fourier transform in Stein and Weiss~\citep{SteinWeiss1971} for details.

For $n$ points $x=(x_1,\dots,x_n)\in(S^m)^n$ and $i\in\{1,2\}$, let
\[
    \mathcal H_i^{(n)}
    =
    \sigma\left(h_i(x_1),\dots,h_i(x_n)\right),
\]
and let
\[
    E_i^{(n)}f
    =
    \mathbb E\left[f\mid \mathcal H_i^{(n)}\right]
\]
be the corresponding conditional expectation operator on $L^2((S^m)^n)$. In what follows, we suppress the dependence on $n$ because a classical (tensorisation) result of Witsenhausen~\citep{Witsenhausen1975} asserts that passing from one independent pair $(X,Y)$ to the $n$ independent pairs $(h_1(x_j),h_2(x_j))$ does not increase the maximal correlation. The last piece we need to prove Proposition~\ref{prop:sphere-suspension} is the following transversality statement.

\begin{proposition}\label{prop:height-transversality}
    For every $m \ge 2$, $n\ge1$ and $f\in L^2_0((S^m)^n)$, we have
    \[
        \|E_2E_1f\|_2\le (1/m)\|f\|_2.
    \]
\end{proposition}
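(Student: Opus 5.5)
The plan is to reduce to a statement about a single function of the first heights, and then to combine the one-point bound $c_m\le 1/m$ with Witsenhausen's tensorisation.

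First I would use that $E_1$ is an orthogonal projection, so $E_2E_1f=E_2g$ with $g=E_1f$. The function $g$ is $\mathcal H_1$-measurable, has mean zero because $f$ does, and satisfies $\|g\|_2\le\|f\|_2$. It therefore suffices to show
\[
\|E_2g\|_2\le (1/m)\|g\|_2
\]
for every mean-zero $g$ that depends only on $(h_1(x_1),\dots,h_1(x_n))$. Under $\mu^n$, the pairs $(h_1(x_j),h_2(x_j))$, $j=1,\dots,n$, are independent, and each has the joint law of $(X,Y)$. So $E_2g$ is exactly $\mathbb E[G(X_1,\dots,X_n)\mid Y_1,\dots,Y_n]$, where $g=G(h_1(x_1),\dots,h_1(x_n))$. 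The supremum of $\|E_2g\|_2/\|g\|_2$ over such mean-zero $g$ is the maximal correlation between the vectors $(X_j)_j$ and $(Y_j)_j$. By Witsenhausen's tensorisation~\citep{Witsenhausen1975}, this equals the maximal correlation of a single pair, namely $c_m$. So the whole statement reduces to $c_m\le1/m$.

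For the one-point bound, I would expand $F\in L^2_0([-1,1],\nu_m)$ in normalised Gegenbauer polynomials $P_k=C_k^{\lambda}/C_k^{\lambda}(1)$ with $\lambda=(m-1)/2$ and $k\ge1$. These polynomials are orthogonal for $\nu_m$, whose density is proportional to $(1-t^2)^{\lambda-1/2}$, and the constant $k=0$ term is removed by the mean-zero condition. The key identity is
\[
\mathbb E[P_k(X)\mid Y]=P_k(0)\,P_k(Y).
\]
To prove it, note that $z\mapsto P_k(\langle z,e_1\rangle)$ is a degree-$k$ zonal spherical harmonic. Averaging it over the latitude $\{z_2=y\}$ amounts to averaging over rotations fixing $e_2$, which again yields a zonal harmonic of degree $k$, now about $e_2$, so it equals $\beta P_k(y)$. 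Evaluating at $y=1$, where the latitude collapses to the point $e_2$ and $\langle e_2,e_1\rangle=0$, gives $\beta=P_k(0)$; this is Funk--Hecke. Hence the conditional expectation acts diagonally with eigenvalues $P_k(0)$, and $c_m=\sup_{k\ge1}|P_k(0)|$.

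Finally I would compute these eigenvalues. Odd $k$ give $P_k(0)=0$, including $k=1$, which reflects that $X$ and $Y$ are uncorrelated. For $k=2$ we have $C_2^\lambda(t)=2\lambda(1+\lambda)t^2-\lambda$, so
\[
|P_2(0)|=\frac{\lambda}{2\lambda(1+\lambda)-\lambda}=\frac1{2\lambda+1}=\frac1m .
\]
The expected main obstacle is showing $|P_{2j}(0)|\le|P_2(0)|$ for all $j\ge1$. I would use the explicit formula
\[
P_{2j}(0)=(-1)^j\frac{(1/2)_j}{(\lambda+1/2)_j},
\]
obtained from the generating function of $C_k^\lambda$ together with $C_k^\lambda(1)=(2\lambda)_k/k!$. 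Successive ratios of this formula have absolute value $(j+1/2)/(j+\lambda+1/2)<1$ for $\lambda>0$, so $|P_{2j}(0)|$ decreases in $j$ and the supremum is $1/m$. Alternatively, I would cite Stein--Weiss~\citep{SteinWeiss1971} for this monotonicity. This gives $c_m\le1/m$, and with the reduction above, Proposition~\ref{prop:height-transversality} follows.
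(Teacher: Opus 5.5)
Your proposal is correct and follows the paper's own route. You set $g=E_1f$, use Witsenhausen's tensorisation to reduce to the single-pair maximal correlation $c_m$, and bound $c_m$ by diagonalising the conditioning operator on zonal harmonics (Gegenbauer polynomials). The one difference is that you carry out the Funk--Hecke eigenvalue computation $P_{2j}(0)=(-1)^j(1/2)_j/(\lambda+1/2)_j$ and its monotonicity in $j$ explicitly, whereas the paper only sketches this step and defers to Stein--Weiss.
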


\begin{proof}
    Let $g=E_1f$.  Then $g$ is mean-zero, $\mathcal H_1^{(n)}$-measurable, and $\|g\|_2\le\|f\|_2$.  Identifying $g$ with a mean-zero function of the vector $(h_1(x_1),\dots,h_1(x_n))$, Witsenhausen's tensorisation theorem gives
    \[
        \|E_2g\|_2\le (1/m)\|g\|_2.
    \]
    Since $E_2g=E_2E_1f$, the claim follows.
\end{proof}

We are now ready to prove that induction by suspension works.

\begin{proof}[Proof of Proposition~\ref{prop:sphere-suspension}]
    The argument proceeds as follows. If $S^{m-1}$ expands, we may apply one of its expanding maps separately on every latitude and leave the height fixed.  This mixes all directions except the height direction.  To mix the height information as well, we do the same thing after rotating the sphere, so that a different coordinate plays the role of height; see Figure~\ref{fig:two-heights} for an illustration. We now fill in the details.

    \begin{figure}[!t]
    \centering
    \begin{tikzpicture}[scale=1.0,>=Latex,every node/.style={font=\small}]
        \begin{scope}[xshift=0cm]
            \draw[thick] (0,0) circle (1.65);
            \draw[dashed] (-1.15,0.85) .. controls (0,1.12) .. (1.15,0.85);
            \draw[blue!70!black,thick] (-1.28,0) -- (1.28,0);
            \draw[dashed] (-1.15,-0.85) .. controls (0,-1.12) .. (1.15,-0.85);
            \draw[->] (0,-2.05) -- (0,2.2) node[above] {$h_1$};
            \node[align=center] at (0,-2.55) {mix on circles of\\constant $h_1$};
        \end{scope}

        \draw[->,very thick] (2.45,0) -- (3.65,0);
        \node at (3.05,0.35) {rotate};

        \begin{scope}[xshift=6.1cm]
            \draw[thick] (0,0) circle (1.65);
            \draw[dashed] (0.85,-1.15) .. controls (1.12,0) .. (0.85,1.15);
            \draw[blue!70!black,thick] (0,-1.28) -- (0,1.28);
            \draw[dashed] (-0.85,-1.15) .. controls (-1.12,0) .. (-0.85,1.15);
            \draw[->] (-2.05,0) -- (2.2,0) node[right] {$h_2$};
            \node[align=center] at (0,-2.55) {then mix on circles of\\constant $h_2$};
        \end{scope}
    \end{tikzpicture}
    \caption{The suspension step, drawn on $S^2$.}
    \label{fig:two-heights}
    \end{figure}

    Write a point of $S^m$ as $(\sin\theta,\cos\theta\,u)$, where $u\in S^{m-1}$ and $\theta \in [-\pi/2,\pi/2]$.  The first-coordinate height is $h_1=\sin\theta$; using $\theta$ instead of $h_1$ is only a reparametrisation of the same height fibres.

    Let the expansion of $S^{m-1}$ be witnessed by maps drawn from a finitely supported probability measure $\mathsf P$, and let $q<1$ be the corresponding contraction factor.  Suspend each map $\phi\in\supp\mathsf P$ to $S^m$ by
    \[
        \widehat\phi(\sin\theta,\cos\theta\,u)
        =
        (\sin\theta,\cos\theta\,\phi(u)).
    \]
    This map preserves spherical measure: conditional on the height $h_1=\sin\theta$, the point $u$ is uniform on the latitude copy of $S^{m-1}$.  It is also bi-Lipschitz, with constants depending only on the old bi-Lipschitz constants, because the round metric in these coordinates is
    \[
        d\theta^2+\cos^2\theta\,ds^2_{S^{m-1}},
    \]
    where $ds^2_{S^{m-1}}$ denotes the round line element on $S^{m-1}$.  Thus, along any curve, $\widehat\phi$ leaves the height contribution to length unchanged and stretches the latitude contribution by at most the old Lipschitz constant; the same applies to $\widehat\phi^{-1}$.

    Let $P_1$ be the averaging operator obtained from these suspended maps, acting diagonally on $(S^m)^n$, and let $E_1$ be conditional expectation onto the $n$ first-coordinate heights.  For fixed heights in $(-1,1)^n$, the fibre is a product of latitude copies of $S^{m-1}$, and on this fibre $P_1$ is exactly the old averaging operator on $(S^{m-1})^n$; the excluded polar heights have measure zero.  Since $f-E_1f$ has mean zero on almost every such fibre, while $E_1f$ is fixed by $P_1$, we get
    \begin{equation}\label{eq:suspension-fiber-mixing}
        \|P_1^k f-E_1f\|_2\le q^k\|f-E_1f\|_2
    \end{equation}
    for all $k\ge1$, uniformly in $n$.  Define $P_2$ and $E_2$ in the same way using the second-coordinate heights.

    Choose $k$ so that $\delta=q^k<(1-1/m)/4$.  Finally set $Q_i=P_i^k$ for $i \in \{1,2\}$.  We show that $Q_2Q_1$ contracts every mean-zero function $f\in L^2_0((S^m)^n)$ with $\|f\|_2=1$.

    From~\eqref{eq:suspension-fiber-mixing}, we see that
    \[
        Q_1f=E_1f+r_1
    \]
    for some $r_1 \in \ker E_1 \subset L^2_0((S^m)^n)$ with $\|r_1\|_2\le\delta$.  Applying the same estimate to $Q_2$ gives
    \[
        Q_2Q_1f=E_2E_1f+E_2r_1+r_2
    \]
    for some $r_2 \in \ker E_2 \subset L^2_0((S^m)^n)$ with $\|r_2\|_2\le\delta$, since $\|Q_1f-E_2Q_1f\|_2\le\|Q_1f\|_2\le1$.

    Proposition~\ref{prop:height-transversality} and $\|E_2r_1\|_2\le\|r_1\|_2$ now give
    \[
        \|Q_2Q_1f\|_2
        \le
        1/m+2\delta
        <1.
    \]

    The operator $Q_2Q_1$ is the average over finitely many compositions of suspended maps and their rotated copies.  Those maps preserve measure and have uniformly bounded bi-Lipschitz constants.  Therefore they witness uniform expansion of $S^m$.
\end{proof}

\section{Conclusion}\label{sec:conclusion}

The geometry of monotone graph properties strikes us as fundamental, and we close with two questions we find interesting.

The first is the basic classification problem, since uniform expansion is only a sufficient condition for the existence of thresholds.

\begin{question}\label{ques:which-spaces}
Which compact metric probability spaces admit thresholds?
\end{question}

It would be useful to find a geometric condition that is both necessary and sufficient  for thresholds to exist. The disconnected one-dimensional space discussed earlier shows that compactness and non-atomicity are not enough.

Second, our proof is deliberately soft and gives essentially no information about the shape of the transition, so it is natural to ask for more precise results.

\begin{question}\label{ques:sharp-thresholds}
Given a metric probability space that admits thresholds, which (sequences of) monotone graph properties have sharp thresholds?
\end{question}

Friedgut~\citep{Friedgut1999} showed for product measures on the hypercube that the absence of a sharp threshold points to a bounded local witness. It would be useful to have a comparable explanation in the geometric setting. Goel--Rai--Krishnamachari~\citep{GoelRaiKrishnamachari2005} bound the additive width of the threshold window, in the spirit of Friedgut--Kalai~\citep{FriedgutKalai1996}, for symmetric monotone properties on cubes and tori. The present paper gives a complementary first step in the multiplicative direction; however, quantitative multiplicative widths in general and sharp thresholds in particular remain largely uncharted.

\section*{Acknowledgements}
The author thanks Will Perkins for introducing the author to the question, Steve Miller for help simplifying the group theory, Jeff Kahn for many interesting discussions about thresholds, and ChatGPT for help with TikZ. The author also acknowledges support from NSF grant DMS-2237138 and a Sloan Research Fellowship.

\bibliographystyle{amsplain}
\bibliography{geometric_graphs}

\end{document}